\title{Computational Multiscale Methods for Linear Heterogeneous Poroelasticity\thanks{The authors acknowledge support from the  Germany/Hong Kong Joint Research Scheme sponsored by the German Academic Exchange Service (DAAD) under the project 57334719 and the Research Grants Council of Hong Kong with reference number G-CUHK405/16. Further, the authors thank the Hausdorff Institute for Mathematics in Bonn for the kind hospitality during the trimester program on multiscale problems in 2017.}}
\author{Robert Altmann\thanks{Department of Mathematics, University of Augsburg.}\ , Eric Chung\thanks{Department of Mathematics, The Chinese University of Hong Kong.}\ , Roland Maier$^\dagger$,\\ Daniel Peterseim$^\dagger$\thanks{supported by the Sino-German Science Center on the occasion of the Chinese-German Workshop on Computational and Applied Mathematics in Shanghai 2017.}, Sai-Mang Pun$^\ddagger$}
\DeclareSymbolFont{fouriersymbols}{FMS}{futm}{m}{n}
\DeclareSymbolFont{fourierlargesymbols}{FMX}{futm}{m}{n}
\DeclareMathDelimiter{\VERT}{\mathord}{fouriersymbols}{152}{fourierlargesymbols}{147}
\newcommand\norm[1]{\Vert#1\Vert}
\newtheorem{theorem}{Theorem}[section]
\newtheorem{lemma}[theorem]{Lemma}
\theoremstyle{definition}
\theoremstyle{remark}
\definecolor{myBlue}{RGB}{30,144,255} 
\definecolor{myGreen}{RGB}{69,169,0} 
\definecolor{myRed}{RGB}{165,12,42} 
\definecolor{myOrange}{RGB}{225,92,22} 
\newcommand{\delete}[1]{ }
\newcommand\calA{\mathcal A}
\newcommand\calB{\mathcal B}
\newcommand\bilA{\mathfrak a}
\newcommand\calCf{\mathcal C_\text{fs}}
\newcommand\sumjn{\sum_{j=1}^n}
\newcommand\R{\mathbb R}
\newcommand\V{V}   
\newcommand\Q{Q}   
\newcommand\Vh{V_h}  
\newcommand\Qh{Q_h}  
\newcommand\VH{V_H}  
\newcommand\QH{Q_H}  
\newcommand\Vms{V_\text{ms}}
\newcommand\Qms{Q_\text{ms}}
\newcommand\Vf{V_\text{fs}}   
\newcommand\Qf{Q_\text{fs}}   
\newcommand\Ru{R_\text{ms}^1}
\newcommand\Rp{R_\text{ms}^2}
\newcommand\RS{\tilde R_\text{ms}}     
\newcommand\RuS{\tilde R_\text{ms}^1}  
\newcommand\RpS{\tilde R_\text{ms}^2}  
\newcommand\RfS{\tilde R_\text{fs}}     
\newcommand\uh{u_h^n}           
\newcommand\uProj{\Ru u_h^n}    
\newcommand\ums{u_\text{ms}^n}  
\newcommand\umsS{\tilde u_\text{ms}^n}  
\newcommand\ufS{u_\text{fs}^n}   
\newcommand\ph{p_h^n}           
\newcommand\pProj{\Rp p_h^n}    
\newcommand\pms{p_\text{ms}^n}  
\newcommand\pmsS{\tilde p_\text{ms}^n}  
\newcommand\uhj{u_h^j}          
\newcommand\phj{p_h^j}          
\newcommand\errRu{\rho_u^n}
\newcommand\errRp{\rho_p^n}
\newcommand\errEu{\eta_u^n}
\newcommand\errEp{\eta_p^n}
\newcommand\errEupre{\eta_u^{n-1}}  
\newcommand\errEppre{\eta_p^{n-1}}
\newcommand\errRuj{\rho_u^j}        
\newcommand\errRpj{\rho_p^j}
\newcommand\errEuj{\eta_u^j}	    
\newcommand\errEpj{\eta_p^j}
\newcommand\errEujj{\eta_u^{j-1}}	    
\newcommand\errEpjj{\eta_p^{j-1}}
\newcommand\errEuinit{\eta_u^0}     
\newcommand\errEpinit{\eta_p^0}
\newcommand\D{D_\tau}
\newcommand\datan{\ddd^n}
\DeclareMathOperator{\ddd}{data}
\newcommand\dt{\,\text{d}t}
\newcommand\dx{\,\text{d}x}
\begin{document}

\maketitle
\begin{abstract}
We consider a strongly heterogeneous medium saturated by an incompressible viscous fluid as it appears in geomechanical modeling. This poroelasticity problem suffers from rapidly oscillating material parameters, which calls for a thorough numerical treatment. In  this  paper, we propose a method based on the local orthogonal decomposition technique and motivated by a similar approach used for linear thermoelasticity. Therein, local corrector problems are constructed in line with the static equations, whereas we propose to consider the full system. This allows to benefit from the given saddle point structure and results in two decoupled corrector problems for the displacement and the pressure. We prove the optimal first-order convergence of this method and verify the result by numerical experiments.
\end{abstract}
%
%
\section{Introduction}
Modeling the deformation of porous media saturated by an incompressible viscous fluid is of great importance for many physical applications such as reservoir engineering in the field of geomechanics \cite{Zob10} or the modeling of the human anatomy for medical applications \cite{Mura2016,doi:10.1137/130921866}. To obtain a reasonable model, it is important to couple the flow of the fluid with the behavior of the surrounding solid.
Biot proposed a model that couples a Darcy flow with linear elastic behavior of the porous medium \cite{Biot41}. The corresponding analysis was given in \cite{Sho00}. For this so-called poroelastic behavior, pressure and displacement are averaged across (infinitesimal) cubic elements such that pressure and displacement can be treated as variables on the entire domain of interest. Furthermore, the model is assumed to be quasi-static, i.e., an internal equilibrium is preserved at any time. In the poroelastic setting, this means that volumetric changes occur slowly enough for the pressure to remain basically constant throughout an infinitesimal element.

If the given material is homogeneous, the poroelastic behavior can be simulated using standard numerical methods such as the finite element method, see for instance \cite{ErnM09}. However, if the medium is strongly heterogeneous, the material parameters may oscillate on a fine scale. In such a scenario, the classical finite element method only yields acceptable results if the fine scale is resolved by the spatial discretization, which is unfeasible in practical applications.  
To overcome this issue, homogenization techniques may be applied, such as the general multiscale finite element method (GMsFEM) \cite{EGH13}, used in \cite{BroV16a, BroV16b}, or the localized orthogonal decomposition technique (LOD) \cite{MalP14} as used in \cite{MalP17} for the similar problem of linear thermoelasticity. The general idea of these methods is to construct low-dimensional finite element spaces which incorporate spatial fine scale features using adapted basis functions. This involves additional computations in the offline stage with the benefit of having much smaller linear systems to solve in every time step due to the much lower amount of degrees of freedom.

In the present paper, a multiscale finite element method is proposed based on the LOD method and adopting the ideas presented in \cite{MalP17}. In contrast to the method of \cite{MalP17}, we are able to exploit the saddle point structure of the problem in order to obtain fully symmetric and decoupled corrector problems without the need for additional corrections. 
Furthermore, this implies that the correctors are independent of the Biot-Willis fluid-solid coupling coefficient, although it may vary rapidly as well.  

The work is structured as follows. In Section~\ref{sec:linporo} we present the model problem and introduce the necessary notation. Section~\ref{sec:numapprox} is devoted to the discretization of the problem. This includes the classical finite element method on a fine mesh as well as the formulation introduced in \cite{MalP17} translated to poroelasticity. We then introduce the decoupled corrector problems and the resulting new multiscale scheme for which we prove convergence. Finally, numerical results, which illustrate the theoretical findings, are presented in Section~\ref{sec:numex}.

Throughout the paper $C$ denotes a generic constant, independent of spatial discretization parameters and the time step size. Further, $a \lesssim b$ will be used equivalently to $a \leq C b$. 
%
%
\section{Linear Poroelasticity}\label{sec:linporo}
%
\subsection{Model problem}
We consider the linear poroelasticity problem in a bounded and polyhedral Lipschitz domain $D \subset \mathbb{R}^d$ ($d = 2,3$) as discussed in \cite{Sho00}. For the sake of simplicity, we restrict ourselves to homogeneous Dirichlet boundary conditions. The extension to Neumann boundary conditions is straightforward. 
This means that we seek the pressure ${p\colon[0,T] \times D \rightarrow \mathbb{R}}$ and the displacement field $u\colon [0,T] \times D \rightarrow \mathbb{R}^d$ within a given time $T>0$ such that 
\begin{subequations}
\label{eqn:poro}
\begin{eqnarray}
    -\nabla \cdot \big( \sigma (u) \big) + \nabla (\alpha p) &=& 0 \quad \text{in } (0,T] \times D, \label{eqn:pde_1}\\
    \partial_t\Big(\alpha \nabla \cdot u + \frac{1}{M} p \Big)- \nabla \cdot \Big( \frac{\kappa}{\nu} \nabla p \Big) &=& f \quad \text{in } (0,T] \times D \label{eqn:pde_2}
\end{eqnarray}    
with boundary and initial conditions    
\begin{eqnarray}
    u &=& 0 \quad\ \text{on } (0,T] \times \partial D, \label{eqn:bc_1}\\
    p &=& 0 \quad\ \text{on } (0,T] \times \partial D, \label{eqn:bc_2}\\
    p(\cdot,0) &=& p^0 \quad \text{in } D \label{eqn:ic}.
\end{eqnarray}
\end{subequations}
In the given model, the primary sources of the heterogeneities in the physical properties arise from the stress tensor $\sigma$, the permeability $\kappa$, and the Biot-Willis fluid-solid coupling coefficient $\alpha$. Further, we denote by $M$ the Biot modulus and by $\nu$ the fluid viscosity. The source term $f$ represents an injection or production process. In the case of a linear elastic stress-strain constitutive relation, we have that the stress tensor and symmetric strain gradient may be expressed as
$$ \sigma(u) = 2\mu \varepsilon (u) + \lambda (\nabla \cdot u) \mathcal{I}, \qquad \varepsilon (u) = \frac{1}{2}\big(\nabla u + (\nabla u)^T \big),$$
where $\mu$ and $\lambda$ are the Lam\'e coefficients and $\mathcal{I}$ is the identity tensor. In the case where the media is heterogeneous the coefficients $\mu$, $\lambda$, $\kappa$, and $\alpha$ may be highly oscillatory.
%
\subsection{Function spaces}
In this subsection we clarify the notation used throughout the paper. We write $(\cdot,\cdot)$ to denote the inner product in $L^2(D)$ and $\norm{\cdot}$ for the corresponding norm. Let $H^1(D)$ be the classical Sobolev space with norm $\norm{v}_1^2 := \norm{v}_{H^1(D)}^2 = \norm{v}^2 + \norm{\nabla v}^2$ and let $H_0^1(D)$ be the subspace with functions having a vanishing trace. The corresponding dual space is denoted by $H^{-1}(D)$. Moreover, we write $L^p(0,T;X)$ for the Bochner space with the norm
\begin{align*}
  \norm{v}_{L^p(0,T;X)} = \bigg(\int_0^T \norm{v &}_X^p \dt\bigg)^{1/p}, \quad 1\leq p < \infty, \\
  \norm{v}_{L^\infty(0,T;X)} &= \sup_{0\leq t\leq T} \norm{v}_X,
\end{align*}
where $X$ is a Banach space equipped with the norm $\norm{\cdot}_X$. The notation $v\in H^1(0,T; X)$ is used to denote that both $v$ and $\partial_t v$ are elements of the space $L^2(0,T;X)$. To shorten notation we define the spaces for the displacement and the pressure by 
$$ \V := \big[ H^1_0(D) \big]^d, \qquad \Q := H^1_0(D).$$
%
\subsection{Variational formulation}
In this subsection we give the corresponding variational formulation of the poroelasticity system \eqref{eqn:poro}. To obtain a variational form we multiply the equations \eqref{eqn:pde_1} and \eqref{eqn:pde_2} with test functions from $\V$ and $\Q$, respectively, and use Green's formula together with the boundary conditions \eqref{eqn:bc_1} and \eqref{eqn:bc_2}. This leads to the following problem: find $u(\cdot,t)\in \V$ and $p(\cdot,t)\in \Q$ such that
\begin{subequations}
\label{eqn:var}
\begin{eqnarray}
    a(u,v) - d(v, p) &=& 0, \label{eqn:var_f_1} \\
    d(\partial_t u, q) + c(\partial_t p,q) +b(p,q) &=& (f,q), \label{eqn:var_f_2} 
\end{eqnarray}
for all $v\in \V$, $q \in \Q$ and  
\begin{eqnarray}
    p(\cdot,0) = p^0. \label{eqn:var_ic}
\end{eqnarray}
\end{subequations}
The bilinear forms $a\colon \V\times\V\to\R$, $b,c\colon \Q\times\Q\to\R$, and $d\colon \V\times\Q\to\R$ are defined through 
$$ a(u,v) := \int_D \sigma(u) : \varepsilon(v) \dx, \qquad b(p,q) := \int_D \frac{\kappa}{\nu}\, \nabla p \cdot \nabla q \dx,$$
$$ c(p,q) := \int_D \frac{1}{M}\, pq \dx, \qquad d(u,q) := \int_D \alpha\, (\nabla \cdot u)q \dx.$$
Note that \eqref{eqn:var_f_1} can be used to define a consistent initial value $u^0:=u(\cdot, 0)$. Using Korn's inequality \cite{Cia88}, we have the bounds 
\begin{equation}\label{eq:bounds_a}
 c_{\sigma} \norm{v}_{1}^2 \leq a(v,v) \leq C_{\sigma} \norm{v}_{1}^2 
\end{equation}
for all $v\in \V$, where $c_\sigma$ and $C_\sigma$ are positive constants. Similarly, there are positive constants $c_\kappa$ and $C_\kappa$ such that 
\begin{equation}\label{eq:bounds_b}
c_{\kappa} \norm{q}_{1}^2 \leq b(q,q) \leq C_{\kappa} \norm{q}_{1}^2
\end{equation}
for all $q \in \Q$. 
We write $\norm{\cdot}_a$ for the energy norm induced by the bilinear form $a(\cdot,\cdot)$ and similarly $\norm{\cdot}_b$ for the norm induced by $b$. 
The existence and uniqueness of solutions $u$ and $p$ to \eqref{eqn:var} have been discussed and proved in \cite{Sho00}. 
%
%
\section{Numerical Approximation}\label{sec:numapprox}
In this section, we present different finite element schemes for the discretization of system \eqref{eqn:var}. The classical method is only meaningful if oscillations are resolved by the underlying mesh and this approach will solely serve as a reference.
The main goal of this section is to approximate the solution on a mesh of some feasible coarse scale of resolution independent of microscopic oscillations. Our construction is based on the concept of {\em localized orthogonal decomposition} (LOD)  \cite{MalP14,Brown.Peterseim:2014,Pet16,doi:10.1137/16M1088533} and adapts ideas from thermoelasticity \cite{MalP17} and the heat equation \cite{MalP18}.
%
\subsection{Fine-scale discretization using classical FEM}
We define appropriate finite element spaces for the poroelasticity system. Let $\{\mathcal{T}_h\}_{h>0}$ be a shape regular family of meshes \cite{Cia78} for the computational domain $D$ with mesh size $h_K:=\text{diam}(K)$ for $K \in \mathcal{T}_h$. We define the maximal diameter by $h := \max_{K\in \mathcal{T}_h}h_K$. Based on the mesh, we define the piecewise affine finite element spaces
$$ \Vh := \{ v\in \V : v|_K \text{ is a polynomial of degree} \leq 1 \text{ for all } K \in \mathcal{T}_h \},$$
$$ \Qh := \{ q\in\Q : q|_K \text{ is a polynomial of degree} \leq 1 \text{ for all } K \in \mathcal{T}_h \}.$$
For the discretization in time, we consider a uniform time step $\tau$ such that $t_n = n\tau$ for $n\in \{0,1,\cdots,N\}$ and $T = N\tau$.

Using the notation introduced above, we discretize system \eqref{eqn:var} with a backward Euler scheme in time and by finite elements in space, i.e., for $n\in\{1,\cdots,N\}$ we aim to find $u_h^n\in \Vh$ and $p_h^n \in \Qh$ such that 
\begin{subequations}
\label{eqn:var_h}
\begin{eqnarray}
    a(u_h^n, v) -d(v, p_h^n) &=& 0, \label{eqn:var_d_1} \\
    d(\D u_h^n,q) + c(\D p_h^n,q) + b(p_h^n,q) &=& (f^n,q) \label{eqn:var_d_2}
\end{eqnarray}
\end{subequations}
for all $v \in \Vh$ and $q \in \Qh$. Within the equations, $\D$ denotes the discrete time derivative, i.e., $\D u_h^n := (u_h^n-u_h^{n-1})/\tau$, and $f^n := f(t_n)$. As initial value we choose $p_h^0\in \Qh$ to be a suitable approximation of $p^0$. Similarly as before, $u_h^0$ is uniquely determined by the variational problem 
$$ a(u_h^0,v) = d (v, p_h^0)$$
for all $v\in \Vh$. 
\begin{lemma}\label{lemma:wp1} 
Given initial data $u_h^0\in \Vh$ and $p_h^0\in \Qh$, system \eqref{eqn:var_h} is well-posed, i.e., there exists a unique solution, which is bounded in terms of the initial values and the source term $f$.
\end{lemma}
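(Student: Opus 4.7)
The plan is to establish well-posedness stepwise in $n$: at each time $n$, the system \eqref{eqn:var_h} is a square linear system in the finite-dimensional space $\Vh\times\Qh$, so existence reduces to uniqueness, which in turn reduces to showing that the only solution of the homogeneous problem (zero right-hand side, vanishing previous iterate) is trivial. For the homogeneous version, I would test \eqref{eqn:var_d_1} with $v=\uh$ to obtain $d(\uh,\ph)=a(\uh,\uh)$, multiply \eqref{eqn:var_d_2} by $\tau$ (absorbing $\D$ into $\uh$ and $\ph$ since the previous step vanishes) and test with $q=\ph$, yielding $d(\uh,\ph)+c(\ph,\ph)+\tau b(\ph,\ph)=0$. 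Substituting gives
\begin{equation*}
a(\uh,\uh)+c(\ph,\ph)+\tau b(\ph,\ph)=0,
\end{equation*}
and since every term on the left is non-negative, coercivity \eqref{eq:bounds_a} and \eqref{eq:bounds_b} force $\uh=0$ and $\ph=0$. This yields existence and uniqueness inductively starting from the prescribed $u_h^0,p_h^0$.

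For the stability bound (the "bounded in terms of the initial values and the source term $f$" part), I would repeat the testing procedure on the full inhomogeneous system, but this time pair \eqref{eqn:var_d_1} with $v=\D\uh$ rather than with $\uh$, so that the coupling terms cancel. Together with $q=\ph$ in \eqref{eqn:var_d_2}, this gives
\begin{equation*}
a(\uh,\D\uh)+c(\D\ph,\ph)+b(\ph,\ph)=(f^n,\ph).
\end{equation*}
Using the standard symmetric-bilinear identity $2\,a(\uh,\D\uh)\geq \D\, a(\uh,\uh)$ and analogously for $c$, summing over $n=1,\dots,N'$ produces a telescoping structure on the left, while the right-hand side is absorbed via Cauchy--Schwarz, Poincaré, and \eqref{eq:bounds_b} (splitting a fraction of $b(\ph,\ph)$ to hide the $\|\ph\|$ factor). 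This delivers a bound of $\norm{\uh}_a^2 + \norm{\ph}^2_c + \tau\sum_{j\leq n}\norm{p_h^j}_b^2$ in terms of $\norm{u_h^0}_a$, $\norm{p_h^0}_c$, and $\sum\tau\|f^j\|^2$.

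The main obstacle I anticipate is handling the non-symmetric coupling term $d$ cleanly across time steps: a naive test with $q=\ph$ in \eqref{eqn:var_d_2} introduces $d(\D\uh,\ph)$, which is not immediately telescoping with what the first equation provides. The trick, as indicated above, is to differentiate the first equation in discrete time (i.e.\ subtract the equation at step $n{-}1$ from that at step $n$), or equivalently to test \eqref{eqn:var_d_1} with $\D\uh$; this makes the $d$-terms cancel exactly. The rest is routine Grönwall-type bookkeeping.
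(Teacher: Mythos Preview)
Your proposal is correct and follows essentially the same approach as the paper: testing \eqref{eqn:var_d_1} with $v=\D\uh$ (equivalently $u_h^n-u_h^{n-1}$, as the paper writes it) and \eqref{eqn:var_d_2} with $q=\ph$ so that the $d$-terms cancel, then using the identity $2\,a(\uh,\uh-u_h^{n-1})\ge\|\uh\|_a^2-\|u_h^{n-1}\|_a^2$, Young's inequality on $(f^n,\ph)$ with \eqref{eq:bounds_b}, and summing in $n$. The only organizational difference is that you treat uniqueness separately via a direct test on the homogeneous system, whereas the paper reads uniqueness off from the stability estimate itself; your extra step is harmless but redundant.
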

\begin{proof}
Observe that for the bilinear form $a$ it holds that 
\begin{align}
  2\,a(u_h^n, u_h^n-u_h^{n-1}) 
  &= a(u_h^n,u_h^n) + a(u_h^n-u_h^{n-1},u_h^n-u_h^{n-1}) - a(u_h^{n-1},u_h^{n-1}) \notag\\
  &\ge \norm{u_h^n}^2_a - \norm{u_h^{n-1}}^2_a.  \label{eq:bilsymform}
\end{align}
A similar result can be shown for the bilinear form $c$. Choosing $v = u_h^n-u_h^{n-1} \in \Vh$ as test function in \eqref{eqn:var_d_1} and $q = \tau p_h^n \in \Qh$ in \eqref{eqn:var_d_2} and adding both equations, we obtain
\begin{equation}\label{eq:add1+2}
 a(u_h^n,u_h^n-u_h^{n-1}) + c(p_h^n-p_h^{n-1},p_h^{n}) + \tau\, b(p_h^n,p_h^n) = \tau\, (f^n, p^n_h). 
\end{equation}
Inequality \eqref{eq:bilsymform}, an application of {Young's inequality}, and \eqref{eq:bounds_b} then imply 
\begin{equation*}
 \|u_h^n\|_a^2 + \|p_h^n\|_c^2 + \tau\|p_h^n\|_b^2  \le \frac{\tau}{ c_{\kappa}}\|f^n\|^2 + \|u_h^{n-1}\|_a^2 + \|p_h^{n-1}\|_c^2.
\end{equation*}
A summation over all $n$ finally leads to the stability estimate
\begin{equation*}
  \|u_h^n\|_a^2 + \|p_h^n\|_c^2 + \tau\sum_{j=1}^n\|p_h^j\|_b^2 
  \le \frac{\tau}{ c_{\kappa}} \sum_{j=1}^n\|f^j\|^2 + \|u_h^{0}\|_a^2 + \|p_h^{0}\|_c^2.
\end{equation*}
This implies the uniqueness of the solutions $u_h^n$ and $p_h^n$. Existence follows from the fact that system \eqref{eqn:var_h} is equivalent to a square system of linear equations and, hence, uniqueness implies existence.
\end{proof}
For the presented fine scale discretization in \eqref{eqn:var_h}, one can show the following stability result, which will be important for the convergence proof of the LOD method in Section~\ref{ss:alternLOD}.
\begin{theorem}[{\cite[Th.~3.3]{MalP17}}]
\label{thm:stability}
Assume $f \in L^\infty(0, T;L^2(D)) \cap H^1(0, T;H^{-1}(D))$. Then, the fully discrete solution $(\uh, \ph)$ of \eqref{eqn:var_h} satisfies for all $n=1,\dots, N$ the stability bound 
\[
  \Big( \tau\sumjn \Vert \D \uhj \Vert_1^2 \Big)^{1/2}
  + \Big( \tau\sumjn \Vert \D \phj \Vert^2 \Big)^{1/2} + \Vert \ph \Vert_1
  \lesssim \Vert p_h^0\Vert_1 + \Vert f\Vert_{L^2(0, t_n;L^2(D))}.
\]
Further, in the case $p_h^0 = 0$ we have 
\[
  \Vert\D \uh\Vert_1 + \Vert \D \ph \Vert 
  + \Big( \tau\sumjn \Vert \D \phj \Vert_1^2 \Big)^{1/2}
  \lesssim 
   \Vert f\Vert_{L^\infty(0, t_n;L^2(D))} + \Vert \dot f\Vert_{L^2(0, t_n;H^{-1}(D))}      
\]	
and for $f=0$ it holds that	
\[
  \Vert\D \uh\Vert_1 + \Vert \D \ph \Vert + t_n^{1/2} \Vert \D \ph \Vert_1
  \lesssim t_n^{-1/2} \Vert p_h^0 \Vert_1.   
\]
\end{theorem}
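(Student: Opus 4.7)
The plan is to prove all three inequalities by the discrete energy method, testing with suitable (possibly time-differentiated) discrete time derivatives. The key algebraic workhorse is the symmetry identity used already in Lemma~\ref{lemma:wp1},
\[
  2\,\bilA(w^n, w^n - w^{n-1}) \;\ge\; \|w^n\|_\bilA^2 - \|w^{n-1}\|_\bilA^2,
\]
valid for any symmetric positive bilinear form $\bilA$ (so for $a$, $b$, and $c$). Throughout I would exploit that the first equation \eqref{eqn:var_d_1} holds identically in $n$, so it can be differenced in time to give $a(\D \uh,v) = d(v,\D \ph)$ for every $v\in\Vh$.

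For the first inequality, the plan is to test the time-differenced first equation with $v=\D\uh$ and the second equation with $q=\D\ph$. Subtracting the first from the second cancels the cross term $d(\D\uh,\D\ph)$ and produces
\[
  \|\D\uh\|_a^2 + \|\D\ph\|_c^2 + b(\ph,\D\ph) \;=\; (f^n,\D\ph).
\]
Multiplying by $\tau$, summing $j=1,\dots,n$, and using the symmetry identity on the $b$ term telescopes the third contribution into $\tfrac12(\|\ph\|_b^2-\|p_h^0\|_b^2)$. The remaining right-hand side $\tau\sum(f^j,\D\phj)$ is treated by Cauchy--Schwarz and Young's inequality (absorbing $\|\D\phj\|$ into $\|\D\phj\|_c^2$, which controls the $L^2$ norm via the bound on $1/M$). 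Korn's and \eqref{eq:bounds_a}--\eqref{eq:bounds_b} then yield the first bound after observing $\|\ph\|_1 \lesssim \|\ph\|_b$.

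The second inequality follows by bootstrapping: difference both equations once more in time to obtain the system
\[
  a(\D\uh,v)-d(v,\D\ph)=0,\qquad d(\D^2\uh,q)+c(\D^2\ph,q)+b(\D\ph,q)=(\D f^n,q),
\]
which has exactly the structure of \eqref{eqn:var_h} but with unknowns $(\D\uh,\D\ph)$ and data $\D f^n$. Applying the first inequality to this differenced system controls $\tau\sum\|\D\phj\|_1^2$ and pointwise $\|\D\ph\|_b$ (hence $\|\D\ph\|_1$) by $\|\D p_h^1\|_1$ plus a discrete $\ell^2$ norm of $\D f^j$. The first-step term $\D p_h^1 = p_h^1/\tau$ is estimated by testing \eqref{eqn:var_d_2} at $n=1$ with $q=p_h^1$ (using $p_h^0=0$), which gives $\|p_h^1\|/\tau \lesssim \|f^1\|$. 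A summation-by-parts in time rewrites $\tau\sum(\D f^j,\phi^j)$ as $-(f^n,\phi^n)-\tau\sum(\D f^j,\phi^{j-1})$-type terms in the $H^{-1}$ duality, which is why the norms $\|f\|_{L^\infty L^2}$ and $\|\dot f\|_{L^2 H^{-1}}$ appear on the right. Finally $\|\D\uh\|_1\lesssim \|\D\ph\|$ follows directly from $a(\D\uh,\D\uh)=d(\D\uh,\D\ph)$ together with coercivity.

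The third inequality is the discrete analogue of the parabolic smoothing estimate, and I expect it to be the main technical hurdle, since $f=0$ does not allow an $f$-driven bound. The plan is a time-weighted energy argument: multiply the identity from step one (now with $f\equiv 0$) by $t_n=n\tau$ instead of by $1$, and use the discrete Abel summation
\[
  t_n X^n - t_{n-1} X^{n-1} = \tau X^n + t_{n-1}(X^n-X^{n-1}),
\]
to trade a factor of $t_n$ against an extra discrete time difference. Carrying this through gives control of $t_n \|\D\ph\|_c^2$ and $\tau\sum t_j \|\D\uhj\|_a^2$, from which $t_n^{1/2}\|\D\ph\|$ is extracted. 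To upgrade to $t_n^{1/2}\|\D\ph\|_1$ and pointwise $\|\D\ph\|$, one repeats the bootstrapping of step two on the differenced system with a weight $t_n$; the only data term left is $\|p_h^0\|_1$, which enters through the $b$-telescoping. Combining with $\|\D\uh\|_1\lesssim\|\D\ph\|$ yields the stated bound.
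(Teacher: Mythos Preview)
The paper does not prove this theorem; it is quoted verbatim from \cite[Th.~3.3]{MalP17} and used as a black box in the proof of Theorem~\ref{thm:poro}. So there is no argument in the paper to compare against. Your outline is the standard discrete energy approach and is essentially the strategy of the cited reference: test with suitable (time-differenced) solutions, exploit the cancellation of the $d$-terms coming from the saddle-point structure, and use the symmetry identity to telescope.

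One point in your second step is not quite right as written. Applying the first inequality as a black box to the differenced system would (i) require $\|\D p_h^1\|_1$ on the right, not merely $\|\D p_h^1\|$, and (ii) only deliver the pointwise bound $\|\D p_h^n\|_1$, not the cumulative sum $\tau\sum_j\|\D p_h^j\|_1^2$ that the statement asks for. The fix is to run the energy argument on the differenced system directly rather than to invoke the first estimate: test the once-differenced first equation with $v=\D^2\uh$ and the once-differenced second equation with $q=\D\ph$, add, multiply by $2\tau$, and sum. Then the $b$-term appears as $\tau\sum_j\|\D p_h^j\|_b^2$ (no telescoping), while $a$ and $c$ telescope in the $\D$-variables. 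The initial contributions that remain are $\|\D u_h^1\|_a$ and $\|\D p_h^1\|_c$ (not the $H^1$-norm), and these are indeed bounded by $\|f^1\|$ via the test $v=u_h^1$, $q=p_h^1$ at $n=1$ with $p_h^0=0$. With this correction your plan goes through; the summation-by-parts treatment of $(\D f^j,\D p_h^j)$ and the time-weighted argument for the third estimate are the right ideas.
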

The following theorem states the expected convergence order of $h+\tau$. However, the involved constant for the spatial discretization scales with $\epsilon^{-1}$, which makes this approach unfeasible in  oscillatory media with period $\epsilon$. 
\begin{theorem}[cf.~{\cite[Th.~3.1]{ErnM09}}]
\label{thm:finescale}
Assume that the coefficients satisfy $\mu, \lambda, \kappa, \alpha \in W^{1,\infty}(D)$. Further, let the exact solution $(u,p)$ of \eqref{eqn:poro} be sufficiently smooth and  $(\uh, \ph)$ the fully discrete solution obtained by \eqref{eqn:var_h} for $n=1, \dots, N$. Then, the error is bounded by 
\[
  \Vert u(t_n) - \uh \Vert_1 + \Vert p(t_n) - \ph \Vert 
  + \Big( \tau \sumjn \Vert p(t_j) - p_h^j\Vert_1^2 \Big)^{1/2}
  \le C_\epsilon h + C \tau,
\]
where the constant $C_\epsilon$ scales with $\max \{ \Vert \mu\Vert_{W^{1,\infty}(D)}, \Vert \lambda\Vert_{W^{1,\infty}(D)}, \Vert \kappa\Vert_{W^{1,\infty}(D)}, \Vert \alpha \Vert_{W^{1,\infty}(D)} \}$. 
\end{theorem}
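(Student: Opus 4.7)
The plan is to follow the classical template for conforming finite element plus backward-Euler analysis of coupled saddle-point parabolic problems, exploiting the same energy identity that drove Lemma~\ref{lemma:wp1}. First I would introduce elliptic Ritz projections: $\Pi_a u\in \Vh$ defined by $a(\Pi_a u, v) = a(u,v)$ for all $v\in\Vh$, and $\Pi_b p\in\Qh$ defined by $b(\Pi_b p, q) = b(p,q)$ for all $q\in\Qh$. Standard C\'ea-type estimates together with piecewise-linear interpolation and an $H^2$-regularity shift for the associated elliptic operators yield $\Vert u(t)-\Pi_a u(t)\Vert_1 + \Vert p(t)-\Pi_b p(t)\Vert_1 \lesssim C_\epsilon h$, with analogous bounds for the time-differentiated quantities. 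The constant $C_\epsilon$ picks up $\Vert\mu\Vert_{W^{1,\infty}(D)}$, $\Vert\lambda\Vert_{W^{1,\infty}(D)}$, $\Vert\kappa\Vert_{W^{1,\infty}(D)}$ precisely because these govern the $H^2$ shift of the elliptic problems defining $\Pi_a$ and $\Pi_b$.

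Next I would split the errors as $u(t_n)-\uh = \rho_u^n + \theta_u^n$ and $p(t_n)-\ph = \rho_p^n + \theta_p^n$ with $\rho_u^n := u(t_n)-\Pi_a u(t_n)$, $\rho_p^n := p(t_n)-\Pi_b p(t_n)$, and discrete parts $\theta_u^n\in\Vh$, $\theta_p^n\in\Qh$. Subtracting \eqref{eqn:var_h} from \eqref{eqn:var} evaluated at $t=t_n$ and invoking the defining orthogonalities of the Ritz projections, I obtain the discrete error equations
\begin{align*}
  a(\theta_u^n, v) - d(v, \theta_p^n) &= d(v, \rho_p^n),\\
  d(\D \theta_u^n, q) + c(\D \theta_p^n, q) + b(\theta_p^n, q) &= -d(\D \rho_u^n, q) - c(\D \rho_p^n, q) + \langle r^n, q\rangle,
\end{align*}
where $r^n$ gathers the backward-Euler consistency residuals, each of size $O(\tau)$ in an appropriate norm under sufficient temporal smoothness of $u$ and $p$ (Taylor expansion of $\partial_t u$ and $\partial_t p$ at $t_n$).

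Mimicking the stability proof of Lemma~\ref{lemma:wp1}, I would then test the first equation with $v=\theta_u^n-\theta_u^{n-1}$ and the second with $q=\tau\theta_p^n$ and add. The coupling contribution $d(\theta_u^n-\theta_u^{n-1},\theta_p^n)$ cancels exactly between the two tests, and identity \eqref{eq:bilsymform} together with its analogue for $c$ produces the telescoping left-hand side $\Vert\theta_u^n\Vert_a^2 - \Vert\theta_u^{n-1}\Vert_a^2 + \Vert\theta_p^n\Vert_c^2 - \Vert\theta_p^{n-1}\Vert_c^2 + 2\tau\Vert\theta_p^n\Vert_b^2$. The right-hand side is bounded by Cauchy--Schwarz plus Young's inequality, where the $b$-coercivity constant $c_\kappa$ from \eqref{eq:bounds_b} is used to absorb all terms involving $\theta_p^n$ into the $b$-contribution. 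Summation in $n$ and a discrete Gr\"onwall argument deliver the desired bound on $\theta_u^n$ and $\theta_p^n$; the triangle inequality with the Ritz estimates on $\rho_u^n$, $\rho_p^n$ closes the proof.

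The step I expect to be the main obstacle is the control of the coupling residual $d(\D\rho_u^n, q)$ in the second error equation. Since $\D\rho_u^n$ is only bounded in $L^2(D)$ (via $\partial_t u\in H^2$), one cannot shift derivatives off it without picking up $\nabla\alpha$, so the estimate must scale with $\Vert\alpha\Vert_{W^{1,\infty}(D)}$; this is exactly where the $W^{1,\infty}$-norm of $\alpha$ enters $C_\epsilon$, and it also explains why the constant deteriorates like $\epsilon^{-1}$ in the strongly heterogeneous regime that motivates the LOD construction in the remainder of the paper. Once this term is absorbed against $\tau\Vert\theta_p^n\Vert_b^2$, the remaining manipulations are routine and produce the stated $C_\epsilon h + C\tau$ bound.
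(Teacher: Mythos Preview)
The paper does not actually prove this theorem: it is stated with a reference to \cite[Th.~3.1]{ErnM09} and then used as a black box for the rest of the analysis. There is therefore no ``paper's own proof'' to compare against; your outline is precisely the classical a~priori argument one would expect behind such a citation.

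One point in your sketch deserves more care. After testing the first error equation with $v=\theta_u^n-\theta_u^{n-1}$ you obtain on the right-hand side the term $d(\theta_u^n-\theta_u^{n-1},\rho_p^n)$. Absorbing it by Young against $\Vert\theta_u^n-\theta_u^{n-1}\Vert_a^2$ (which the full identity behind \eqref{eq:bilsymform} does make available) leaves you with $\sum_{j=1}^n\Vert\rho_p^j\Vert^2$ \emph{without} a factor of $\tau$; after summation this is of size $(T/\tau)(C_\epsilon h)^2$ and contaminates the final estimate by $\tau^{-1/2}$. To avoid this you must either (i) perform a discrete summation by parts in time to shift the increment onto $\rho_p$, turning the sum into $d(\theta_u^n,\rho_p^n)+\tau\sum_j d(\theta_u^j,\D\rho_p^{j+1})$, which is harmless once $\partial_t p$ is smooth enough, or (ii) use a coupled Ritz projection for $u$, namely $a(u-\Pi_a u,v)=d(v,p-\Pi_b p)$ for all $v\in\Vh$, so that the right-hand side of the first error equation vanishes identically. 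Either fix is standard; with it in place the rest of your argument (the cancellation of the $d$-coupling under the chosen tests, the telescoping energy identity, absorption of $d(\D\rho_u^n,q)$ via $\Vert\alpha\Vert_{W^{1,\infty}}$, and Gr\"onwall) goes through and yields the stated $C_\epsilon h + C\tau$ bound.
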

%
\subsection{A multiscale method for poroelasticity}\label{ss:schweden}
Within this subsection, we derive the framework of a generalized finite element method for poroelasticity. First, we introduce $\VH$ and $\QH$ analogously to $\Vh$ and $\Qh$ with a larger mesh size $H>h$. Second, we assume that the family  $\{\mathcal{T}_H\}_{H>0}$ is quasi-uniform and that $\mathcal{T}_h$ is a refinement of $\mathcal{T}_H$ such that $\VH \subseteq \Vh$ and $\QH \subseteq \Qh$. The goal is to construct a new function space with the same dimension as $\VH \times \QH$ but with better approximation properties. For this, we follow the methodology of LOD \cite{MalP14, Pet16} and, in particular, translate the results from thermoelasticity presented in \cite{MalP17} to the present setting. 
%
\subsubsection{Multiscale spaces and projections}\label{sss:schweden:spaces}
Consider a quasi-interpolation $I_H \colon \Vh \rightarrow \VH$ with $I^2_H = I_H$, which satisfies the stability estimate that for $v \in \Vh$ we have 
$$
  H_K^{-1} \norm{v-I_H v}_{L^2(K)} + \norm{\nabla I_H v}_{L^2(K)} 
  \lesssim \norm{\nabla v}_{L^2(\omega_K)}
$$
for all $K \in \mathcal{T}_H$ of size $H_K$ and the element patch 
$$
  \omega_K := \text{int} \bigcup_{\hat{K} \in \mathcal{T}_H,\, \hat{K} \cap K \neq \emptyset} \hat K.$$ 
Similarly, we have a corresponding quasi-interpolation operator on $\Qh$, which we also denote by $I_H$. 
Since the mesh is assumed to be shape regular, the stability estimate above is also global, i.e.,
\begin{equation}\label{eq:corstab}
 H^{-1} \norm{v-I_H v} + \norm{\nabla I_H v} \lesssim \norm{\nabla v},
\end{equation}
where the involved constant depends only on the shape regularity of the mesh. 
%
One typical example of such a quasi-interpolation that satisfies the above assumptions is $I_H = E_H \circ \Pi_H$. Here, $\Pi_H$ denotes the piecewise $L^2$-projection onto $P_1(\mathcal{T}_H)$ (or $P_1(\mathcal{T}_H)^d$ respectively), the space of functions that are affine on each triangle $K \in \mathcal{T}_H$. Moreover, $E_H$ is an averaging operator mapping $P_1(\mathcal{T}_H)$ into $\QH$ (or $P_1(\mathcal{T}_H)^d$ into $\VH$) by
$$ 
  \big(E_H(v)\big)(z) := \frac{1}{\text{card}\{K \in \mathcal{T}_H: z\in K\}} \sum_{K\in \mathcal{T}_H,\, z\in K} v|_K(z), 
$$
where $z$ is a free node in $\QH$. For further details and other available options for $I_H$ we refer to \cite{Pet16}.

Next, we define the kernel of $I_H$ in $\Vh$ and $\Qh$, respectively, by 
$$
  \Vf:= \{ v\in \Vh: I_H v = 0\}, \quad \Qf:= \{ q\in \Qh: I_H q = 0\}.
$$
These kernels are fine scale spaces in the sense that they contain all features that are not captured by the coarse spaces $\VH$ and $\QH$. 
The interpolation operator $I_H$ leads to the decompositions $\Vh = \VH \oplus \Vf$ and $\Qh = \QH \oplus \Qf$, meaning that any function $v \in \Vh$ can be uniquely decomposed into $v = v_{H} + v_\text{fs}$ with $v_{H} \in \VH$ and $v_\text{fs} \in \Vf$ and similarly for $q\in\Qh$.
In the following, we consider two projections on the fine scale spaces, the so-called correctors $\calCf^1\colon \Vh\to\Vf$ and $\calCf^2\colon \Qh\to\Qf$ based on the elliptic bilinear forms $a$ and $b$. They are defined by  
\begin{align*}
  a(\calCf^1 u, v) = a(u, v),  \qquad
  b(\calCf^2 p, q) = b(p, q)
\end{align*}
for all $v\in \Vf$ and $q\in \Qf$. 
With these correctors we define the new finite element spaces
$$
  \Vms:= \{ v_H - \calCf^1 v_H : v_H\in\VH \}, \quad \Qms := \{ q_H - \calCf^2 q_H : q_H\in\QH\},
$$
which have the same dimensions as $\VH$ and $\QH$, respectively. Note that this gives the $a$-orthogonal decomposition $\Vh = \Vms \oplus \Vf$ as well as the $b$-orthogonal decomposition $\Qh = \Qms \oplus \Qf$. 
%
\subsubsection{Multiscale method and convergence}
With all tools in hand, we are now able to present the method proposed in 
\cite{MalP17} for thermoelasticity. Since the considered system involves time derivatives, the corrector problem needs to be solved in each time step. This is, however, too expensive for a computational approach. Therefore, we restrict the computations of the multiscale correctors to the stationary system. This then leads to the definition of $\RS = (\RuS, \RpS)\colon \Vh \times \Qh \rightarrow \Vms \times \Qms$, which we use to construct the space of trial functions. For given $u \in \Vh$ and $p\in \Qh$ we define $\RuS(u, p) \in \Vms$ and $\RpS(p) \in\Qms$ by
\begin{subequations}
\begin{eqnarray}
  a(\RuS(u,p), v) - d(v, \RpS p) &=& a(u, v)-d(v,p), \label{eqn:corr_1} \\
  b(\RpS p, q) &=& b(p, q) \label{eqn:corr_2}
\end{eqnarray}
\end{subequations}
for all $v \in \Vms$ and $q\in \Qms$. Note that the projections $\RuS$ and $\RpS$ are coupled and that we have the relation $\RpS = 1-\calCf^2$. Since $\RuS$ depends on the bilinear form $d$, we need a second projection $\RfS\colon \Qh \to \Vf$ defined by  
\begin{eqnarray}
  a(\RfS\, p, v) = - d(v, \RpS p)  \label{eqn:modify}
\end{eqnarray}
for all $v\in \Vf$. We emphasize that this is a fine scale correction. 

The resulting multiscale discretization (combined with a backward Euler scheme in time) then has the following form. For all $n = 1, \dots, N$ find $\umsS = \ums + \ufS$ with $\ums \in \Vms$, $\ufS \in \Vf$ and $\pmsS \in \Qms$ such that
\begin{subequations}
\label{eqn:gfemS}	
\begin{eqnarray}
  a(\umsS, v) - d(v, \pmsS) &=& 0, \label{eqn:gfem_time_1}\\
  d(\D \umsS, q) + c(\D \pmsS, q) + b(\pmsS, q) &=& (f^n,q), \label{eqn:gfem_time_2}\\
  a(\ufS, w) + d( w, \pmsS) &=& 0  \label{eqn:gfem_time_3}
\end{eqnarray}
\end{subequations}
for all test functions $v \in \Vms$, $q\in \Qms$, and $w\in \Vf$. The initial value is given by $\tilde p_\text{ms}^0 = \RpS p_h^0$. Moreover, we define $\tilde{u}_\text{ms}^0 = u_\text{ms}^0 + u_\text{fs}^0$ with $u_\text{fs}^0 \in \Vf$ given by \eqref{eqn:gfem_time_3} and $u_\text{ms}^0 \in \Vms$ defined through the variational problem 
\[
  a(u_\text{ms}^0, v)
  = a(\tilde u_\text{ms}^0, v) 
  = d(v, \tilde p_\text{ms}^0) 
\]
with test functions $v\in \Vms$. System~\eqref{eqn:gfemS} is well-posed and the errors $\Vert \uh - \umsS\Vert_1$ and $\Vert \ph - \pmsS\Vert_1$ are bounded by an $\epsilon$-independent constant times $H$, see \cite[Th.~5.2]{MalP17}. Together with Theorem~\ref{thm:finescale} this implies that the multiscale solution $(\umsS, \pmsS)$ approximates the exact solution $(u,p)$ with order $H$ in space and order $\tau$ in time. 
Moreover, one may manipulate system~\eqref{eqn:gfemS} in such a way that, in practice, one does not need to compute a fine scale correction in each step. 

%
\subsection{An alternative multiscale method}\label{ss:alternLOD}
We have seen that the method proposed in the previous subsection needs an additional correction on the fine mesh. As explained in \cite{MalP17}, the additional fine scale correction only needs to be computed in the offline stage by using a set of basis functions. This keeps the coarse structure of the system in each time step at the expense of slightly more complicated systems. We now propose a simpler method, which exploits the symmetry of the system equations, namely the saddle point structure. This symmetry becomes more evident if we discretize system \eqref{eqn:var} in time first, i.e., if we consider 
\begin{subequations}
\label{eqn:var_Rothe}
\begin{eqnarray}
  a(u^n, v) - d(v, p^n) &=& 0, \label{eq:semidiscrete1} \\
  d(\D u^n,q) + c(\D p^n,q) + b(p^n,q) &=& (f^n,q) \label{eq:semidiscrete2}
\end{eqnarray}
\end{subequations}
for all test functions $v\in \V$ and $q\in\Q$. We show that system \eqref{eqn:var_Rothe} is again well-posed. 
\begin{lemma}
\label{lem:timediscrete}
Given $u^{n-1} \in \V$ and $p^{n-1} \in \Q$, system~\eqref{eqn:var_Rothe} is well-posed for all $\tau>0$ and $n = 1, \dots, N$.
\end{lemma}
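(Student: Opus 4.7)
The plan is to mirror the proof of Lemma~\ref{lemma:wp1}. The uniqueness argument transfers almost verbatim: the difference of two hypothetical solutions with the same $u^{n-1}$, $p^{n-1}$, $f^n$ satisfies \eqref{eqn:var_Rothe} with zero data. Testing \eqref{eq:semidiscrete1} with $v=u^n$ and \eqref{eq:semidiscrete2} with $q=\tau p^n$ and adding, the mixed $d$-terms cancel and I obtain $\|u^n\|_a^2 + \|p^n\|_c^2 + \tau\|p^n\|_b^2 = 0$, whence the coercivity bounds \eqref{eq:bounds_a} and \eqref{eq:bounds_b} force $u^n = 0$ and $p^n = 0$.

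For existence, the square-system argument used in Lemma~\ref{lemma:wp1} is no longer available since $\V$ and $\Q$ are infinite-dimensional. I would eliminate $u^n$ analytically as follows. By \eqref{eq:bounds_a} and Lax--Milgram, one defines a continuous linear operator $S \colon \Q \to \V$ through $a(Sq, v) = d(v, q)$ for all $v \in \V$, so that \eqref{eq:semidiscrete1} is equivalent to $u^n = Sp^n$. Substituting into \eqref{eq:semidiscrete2} and using the symmetry of $a$ together with the defining property of $S$ to rewrite $d(Sp^n, q) = a(Sp^n, Sq)$, the coupled system reduces to the single elliptic problem on $\Q$:
\[
  \tfrac{1}{\tau} a(Sp^n, Sq) + \tfrac{1}{\tau} c(p^n, q) + b(p^n, q)
   = (f^n, q) + \tfrac{1}{\tau} d(u^{n-1}, q) + \tfrac{1}{\tau} c(p^{n-1}, q).
\]
The left-hand bilinear form is symmetric, continuous, and coercive on $\Q$ with coercivity constant at least $c_\kappa$ inherited from \eqref{eq:bounds_b} (the $a$- and $c$-contributions being non-negative); the right-hand side is a bounded linear functional on $\Q$. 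Lax--Milgram then delivers a unique $p^n \in \Q$, after which $u^n := Sp^n \in \V$ closes existence. Continuous dependence on $u^{n-1}$, $p^{n-1}$, and $f^n$ follows from the associated Lax--Milgram stability estimate.

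The main technical point is precisely this conversion of the mixed saddle-point structure into a coercive problem on a single space; the symmetry of $a$, together with the identity $d(Sp, q) = a(Sp, Sq)$, is what makes the reduced operator symmetric and amenable to Lax--Milgram. As an alternative one could apply Brezzi's theory directly to \eqref{eqn:var_Rothe}, verifying continuity plus an inf--sup condition for $d$, but the elimination route is shorter and foreshadows the decoupled corrector strategy developed in Section~\ref{ss:alternLOD}.
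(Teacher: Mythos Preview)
Your proof is correct, but the paper takes a shorter route. Instead of separating uniqueness and existence, the paper defines a single bilinear form on the product space,
\[
  \bilA([u,p],[v,q]) := a(u,v) - d(v,p) + d(u,q) + c(p,q) + \tau\,b(p,q),
\]
observes that the $d$-terms cancel when $[v,q]=[u,p]$ so that $\bilA([u,p],[u,p]) = \|u\|_a^2 + \|p\|_c^2 + \tau\|p\|_b^2$, and applies Lax--Milgram directly on $\V\times\Q$. Your uniqueness argument is exactly this coercivity observation, so the core idea coincides; the difference is only in how existence is obtained. You take the Schur-complement route, eliminating $u^n$ via the solution operator $S$ and reducing to a coercive problem on $\Q$ alone, whereas the paper stays on the product space throughout. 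The paper's version is more economical (a single Lax--Milgram call, no auxiliary operator), while your reduction makes the elliptic structure in the pressure variable explicit and, as you note, anticipates the decoupling philosophy of Section~\ref{ss:alternLOD}. Both are standard and equally valid here.
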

\begin{proof}
We introduce the bilinear form 
$$
 \bilA([u,p],[v,q]) := a(u,v) - d(v,p) + d(u,q) + c(p,q) + \tau\,b(p,q).
$$
Note that $\bilA$ is coercive, since $\bilA([u,p],[u,p]) = \|u\|^2_a + \|p\|_c^2 + \tau\,\|p\|_b^2$. Furthermore, system \eqref{eqn:var_Rothe} is equivalent to 
\[
  \bilA([u^n,p^n],[v,q]) = \tau (f^n, q) + d(u^{n-1}, q) + c(p^{n-1}, q). 
\]
Thus, the existence of a unique solution follows from the \textit{Lax-Milgram theorem}.
\end{proof}
The property that the $d$-terms in system \eqref{eqn:var_Rothe} cancel for corresponding test functions and summation was not used in the previous approach. We will exploit this property, which then leads to a pair of decoupled multiscale correctors. 
%
\subsubsection{Projections}
In this part, we define the two projections $\Ru$ and $\Rp$, on which the new multiscale method is based. The idea is to use the same projections as for the definition of the spaces $\Vms$ and $\Qms$ in Section~\ref{sss:schweden:spaces}. 
In contrast to the previous approach, they are independent of the bilinear form $d$ such that we do not need an additional fine scale correction. A significant consequence is that the projections are independent of the parameter $\alpha$ despite possible oscillations. 
Furthermore, the two projections are uncoupled, which provides a significant simplification in practical computations.

We define $\Ru\colon \Vh\to\Vms$ and $\Rp\colon \Qh\to\Qms$ by 
\begin{align*}
  a(u - \Ru u, v) = 0,  \qquad
  b(p - \Rp p, q) = 0
\end{align*}
for all $v\in \Vms$ and $q\in \Qms$. Note that we have $\Ru = 1-\calCf^1$ and~$\Rp = 1-\calCf^2$ such that $\Vms= \Ru V_H$ and $\Qms= \Rp Q_H$. The bilinear forms $a$ and $b$ can also be written in terms of operators. On the fine scale, we define $\calA\colon \Vh \to\Vh$ and $\calB\colon \Qh \to \Qh$ by 
\[
  (\calA u, v)_{L^2} := a(u, v), \qquad 
  (\calB p, q)_{L^2} := b(p, q)
\]
for all $v\in \Vh$ and $q\in\Qh$. Note that these operators are only well-defined on the discrete spaces $\Vh$. In the following two lemmata we provide bounds for the introduced projections.
\begin{lemma}\label{lem_projectionA}
The projections $\Ru$ and $\Rp$ satisfy the bounds 
\begin{align*}
  \Vert (1-\Ru) v\Vert 
  &\lesssim H\,\Vert (1-\Ru) v\Vert_1
  \lesssim H\,\Vert v\Vert_1, \\ 
  \Vert (1-\Rp) q\Vert 
  &\lesssim H\,\Vert (1-\Rp) q\Vert_1
  \lesssim H\,\Vert q\Vert_1
\end{align*}
for all $v\in \Vh$ and $q\in \Qh$. 
\end{lemma}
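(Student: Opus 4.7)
The plan is to exploit two facts: first, the image of $1-\Ru$ lies in the fine-scale kernel space $\Vf$, which interacts nicely with the Poincar\'e-type inequality coming from the quasi-interpolation; and second, $\Ru$ is the $a$-orthogonal projection onto $\Vms$, so we get a best-approximation bound that can be converted to an $H^1$-bound via the spectral equivalences \eqref{eq:bounds_a} and \eqref{eq:bounds_b}.

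First I would observe that since $\Ru = 1-\calCf^1$, we have $(1-\Ru)v = \calCf^1 v \in \Vf$, so $I_H\bigl((1-\Ru)v\bigr) = 0$ by definition of $\Vf$. Applying the global stability estimate \eqref{eq:corstab} to $w := (1-\Ru) v$ immediately yields
\[
 H^{-1}\,\Vert (1-\Ru) v\Vert
 = H^{-1}\,\Vert w - I_H w\Vert
 \lesssim \Vert \nabla w\Vert
 \le \Vert (1-\Ru) v\Vert_1,
\]
which is the first inequality. The analogous argument works for $\Rp$ since $(1-\Rp) q = \calCf^2 q \in \Qf$.

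Next I would handle the second inequality, i.e.\ the $H^1$-stability of $1-\Ru$. Because $\Ru$ is the $a$-orthogonal projection onto $\Vms$, it realizes the best approximation in the energy norm:
\[
 \Vert (1-\Ru) v\Vert_a^2
 = \inf_{w\in\Vms} \Vert v-w\Vert_a^2
 \le \Vert v\Vert_a^2.
\]
Combining this with the two-sided bound \eqref{eq:bounds_a} gives
\[
 c_\sigma \Vert (1-\Ru) v\Vert_1^2
 \le \Vert (1-\Ru) v\Vert_a^2
 \le \Vert v\Vert_a^2
 \le C_\sigma \Vert v\Vert_1^2,
\]
so $\Vert (1-\Ru) v\Vert_1 \lesssim \Vert v\Vert_1$. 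The corresponding estimate for $\Rp$ follows verbatim by replacing $a$ with $b$ and invoking \eqref{eq:bounds_b} instead of \eqref{eq:bounds_a}.

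There is no real obstacle here; both bounds are essentially consequences of standard LOD machinery. The only point one has to be a bit careful about is noting that the decomposition $\Vh = \Vms \oplus \Vf$ is $a$-orthogonal (and $\Qh = \Qms \oplus \Qf$ is $b$-orthogonal), which is exactly the construction chosen in Section~\ref{sss:schweden:spaces}; this is what allows the best-approximation argument to be applied in the energy norm of the coefficient-dependent bilinear form rather than in a coefficient-blind $H^1$-norm, and it is also what ensures that $(1-\Ru)v$ lands in $\Vf$ in the first place.
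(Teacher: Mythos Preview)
Your proof is correct, but the route for the first inequality differs from the paper's. The paper establishes $\Vert (1-\Ru) v\Vert \lesssim H\,\Vert (1-\Ru) v\Vert_1$ by an Aubin--Nitsche duality argument: it introduces a discrete dual problem $a(z,w)=(v-\Ru v,w)$ for $w\in\Vh$, invokes the result from \cite{MalP14} that $\Vert z-\Ru z\Vert_1\lesssim H\,\Vert v-\Ru v\Vert$, and then uses Galerkin orthogonality to close the estimate. Your argument bypasses duality entirely: since $(1-\Ru)v=\calCf^1 v\in\Vf$ lies in the kernel of $I_H$, the quasi-interpolation estimate \eqref{eq:corstab} applies directly and gives the $L^2$--$H^1$ bound in one line. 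This is more elementary and avoids the external reference; the only thing the duality approach could buy in principle is an argument that does not rely on knowing that $(1-\Ru)v$ sits exactly in $\Vf$, but in the present setting that fact is immediate from $\Ru=1-\calCf^1$. For the second inequality both you and the paper simply appeal to the stability of the $a$-orthogonal projection combined with the norm equivalence \eqref{eq:bounds_a}.
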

\begin{proof}  
The proof is based on the fact that the bilinear forms $a$ and $b$ are elliptic and the Aubin-Nitsche duality argument. We only show the first estimate here.  
For $v\in \Vh$ we consider the variational problem 
\[
  a(z, w) = (v-\Ru v, w)
\]
with test functions $w\in \Vh$. It was shown in \cite{MalP14} that the ellipticity of $a$ implies
\[
  \Vert z - \Ru z \Vert_1 \lesssim H\, \Vert v-\Ru v \Vert.
\] 
With this, we conclude
\[
  \Vert v-\Ru v\Vert^2
  = a(z, v-\Ru v)
  \lesssim \Vert z - \Ru z\Vert_1 \Vert v-\Ru v\Vert_1
  \lesssim H\, \Vert v-\Ru v \Vert \Vert v-\Ru v\Vert_1.
\]
The final step follows from the stability of the projection. 
\end{proof}
\begin{lemma}\label{lem_projectionB}
The projections $\Ru$ and $\Rp$ are bounded in terms of $\calA$ and $\calB$ by  
\[
  \Vert (1-\Ru) v\Vert_{1} \lesssim H\,\Vert \calA v\Vert, \qquad 
  \Vert (1-\Rp) q\Vert_{1} \lesssim H\,\Vert \calB q\Vert. 
\]
for all $v\in \Vh$ and $q\in \Qh$. 
\end{lemma}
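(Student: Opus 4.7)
The plan is to derive each inequality through an energy-norm estimate that invokes the $L^2$-bound from the previous lemma to extract the factor $H$. I focus on the first estimate; the second is completely analogous with $a$, $\mathcal{A}$, $R_\text{ms}^1$ replaced by $b$, $\mathcal{B}$, $R_\text{ms}^2$.

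First I would observe that, by the coercivity and boundedness of $a$ given in \eqref{eq:bounds_a}, the energy norm $\|\cdot\|_a$ is equivalent to the $H^1$-norm on $V_h$. Hence
\[
  \|(1-\Ru)v\|_1^2 \lesssim a((1-\Ru)v,(1-\Ru)v).
\]
Next I exploit the defining property of $\Ru$, namely that $(1-\Ru)v$ is $a$-orthogonal to $\Vms$, which in particular implies $a((1-\Ru)v,\Ru v)=0$. Therefore the right-hand side collapses to $a((1-\Ru)v,v)$, and by the definition of the operator $\calA$ via the $L^2$-pairing this equals $((1-\Ru)v,\calA v)_{L^2}$.

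Then I would apply the Cauchy-Schwarz inequality to obtain
\[
  \|(1-\Ru)v\|_1^2 \lesssim \|(1-\Ru)v\|\,\|\calA v\|,
\]
and invoke Lemma~\ref{lem_projectionA} to estimate $\|(1-\Ru)v\| \lesssim H\,\|(1-\Ru)v\|_1$. After dividing by $\|(1-\Ru)v\|_1$ (trivially assuming it is nonzero, otherwise there is nothing to prove), the claimed bound $\|(1-\Ru)v\|_1 \lesssim H\,\|\calA v\|$ follows. Repeating the argument with $b$ in place of $a$, using $b$-orthogonality of $(1-\Rp)q$ to $\Qms$ and the $L^2$-representation of $\calB$, yields the analogous statement for $\Rp$.

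There is really no hard step here; the only point requiring care is to verify that the projection property applied in the first display can be used even though the right-hand side contains $v \in V_h$ rather than a coarse function, which is resolved by splitting $v = \Ru v + (1-\Ru)v$ and using $a$-orthogonality for the first summand. The mechanism is the standard Aubin-Nitsche-type boost: Lemma~\ref{lem_projectionA} upgrades the $H^1$-error to an $L^2$-error, while the present lemma combines this with an integration-by-parts-type duality (via $\calA$) to improve the stability bound by the same factor~$H$.
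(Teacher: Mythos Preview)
Your proof is correct and follows essentially the same route as the paper: coercivity of $a$, then $a$-orthogonality of $(1-\Ru)v$ to $\Vms$ to reduce to $a(v,(1-\Ru)v)=(\calA v,(1-\Ru)v)$, Cauchy--Schwarz, and finally Lemma~\ref{lem_projectionA} to gain the factor $H$. The paper's argument is line-for-line the same, only written more tersely.
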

\begin{proof}  
For $v\in \Vh$ we get 
\[
  \Vert v-\Ru v\Vert_1^2
  \lesssim a(v, v-\Ru v)
  = (\calA v, v-\Ru v)
  \le \Vert \calA v\Vert\, \Vert v-\Ru v \Vert.
\]
The claim then follows directly from the previous Lemma~\ref{lem_projectionA}. The proof of the result involving $\calB$ follows the same lines. 
\end{proof}
With the projections $\Ru$ and $\Rp$, we are now able to formulate the new multiscale method. For this we discretize system \eqref{eqn:var_Rothe} in space and consider the problem: 
for each $n = 1, \dots, N$ find $\ums \in \Vms = \Ru \VH$ and $\pms \in \Qms = \Rp \QH$ such that
\begin{subequations}
\label{eqn:ms}
\begin{eqnarray}
    a( \ums, v) - d(v, \pms) &=& 0 , \label{eq:ms1} \\
    d(\D \ums,q) + c(\D \pms, q) + b(\pms,q) &=& (f^n,q)  \label{eq:ms2}
\end{eqnarray}
\end{subequations}
for all $v\in \Vms$ and $q\in \Qms$. Note that this system is again well-posed, cf.~Lemma~\ref{lem:timediscrete}. 
Given $p_\text{ms}^0$, we define the initial value $u_\text{ms}^0$ as before through
\[
  a(u_\text{ms}^0, v) = d(v, p_\text{ms}^0)
\] 
for all $v\in \Vms$.
%
\subsubsection{Convergence}
The aim of this subsection is to prove that the solution provided by \eqref{eqn:ms} approximates the fine scale solution $(\uh, \ph)$ up to order $H$. 
In combination with Theorem~\ref{thm:finescale} this then shows that the multiscale solution converges to the exact solution. 
More precisely, we obtain (assuming $h$ sufficiently small) an error estimate which states that the error is bounded by a constant (independent of $\epsilon$) times $H+\tau$. 
Note that we assume here that the corrector problems are solved exactly. Comments on the localization in practical implementations are given in Section~\ref{sss:local}. 
The main result of this paper reads as follows. 
\begin{theorem}
\label{thm:poro}
Assume $f \in L^\infty(0, T;L^2(D)) \cap H^1(0, T;H^{-1}(D))$ and consistent initial data $u_h^0\in \Vh$, $p_h^0\in \Qh$ as well as $u_\text{ms}^0\in \Vms$ and $p_\text{ms}^0 := \Rp p_h^0\in \Qms$. Then, the error of the multiscale solution compared to the fine scale solution satisfies
\[
  \Vert \uh - \ums\Vert_1 + \Vert \ph - \pms\Vert_1
  \lesssim H \datan + t_n^{-1/2} H\, \Vert p_h^0\Vert_1, 
\]
where $\datan$ is defined by 
\[
 \datan := \Vert p_h^0 \Vert_1 + \Vert f\Vert_{L^2(0, t_n;L^2(D))} + \Vert f\Vert_{L^\infty(0, t_n;L^2(D))} + \Vert \dot f\Vert_{L^2(0, t_n;H^{-1}(D))}.
\]
\end{theorem}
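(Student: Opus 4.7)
The plan is to split the discrete errors via the multiscale projections from Section~\ref{ss:alternLOD}: write $\uh - \ums = \errRu + \errEu$ with $\errRu := \uh - \uProj \in \Vf$ and $\errEu := \uProj - \ums \in \Vms$, and analogously $\ph - \pms = \errRp + \errEp$ with $\errEp \in \Qms$. By Lemmas~\ref{lem_projectionA}--\ref{lem_projectionB}, the projection defects $\errRuj$, $\errRpj$ are $O(H)$ in the $H^1$ norm (and $O(H^2)$ in $L^2$) against the $H^1$ norms of $\uhj$, $\phj$, which are in turn controlled by $\datan$ through Theorem~\ref{thm:stability}. The main work is thus to estimate $\errEu$ and $\errEp$. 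Restricting \eqref{eqn:var_h} to test functions $v\in\Vms$, $q\in\Qms$ and using the $a$-orthogonality of $\Vf$ against $\Vms$ (so that $a(\uh,v)=a(\uProj,v)$) and the $b$-orthogonality of $\Qf$ against $\Qms$ (so that $b(\ph,q)=b(\pProj,q)$), then subtracting \eqref{eqn:ms}, yields
\begin{align*}
 a(\errEu, v) - d(v, \errEp) &= d(v, \errRp), \\
 d(\D\errEu, q) + c(\D\errEp, q) + b(\errEp, q) &= -d(\D\errRu, q) - c(\D\errRp, q).
\end{align*}
The choice $p_\text{ms}^0 = \Rp p_h^0$ enforces $\errEpinit = 0$, while testing the first equation at $n=0$ with $v = \errEuinit$ gives $\|\errEuinit\|_a \lesssim \|\rho_p^0\| \lesssim H\|p_h^0\|_1$.

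\textbf{Energy estimate via saddle-point cancellation.} Exploiting the symmetry of the system, I test the first error equation with $v = \errEu-\errEupre = \tau\D\errEu \in \Vms$ and the second with $q = \tau\errEp \in \Qms$ and \emph{add}: the two copies of $d(\errEu-\errEupre,\errEp)$ cancel. Applying \eqref{eq:bilsymform} and its $c$-analogue, telescoping, and summing over $j=1,\ldots,n$ produces
\[
 \tfrac12\|\errEu\|_a^2 + \tfrac12\|\errEp\|_c^2 + \tau\sumjn \|\errEpj\|_b^2 \lesssim \|\errEuinit\|_a^2 + \tau\sumjn \Big[ d(\D\errEuj, \errRpj) - d(\D\errRuj, \errEpj) - c(\D\errRpj, \errEpj) \Big].
\]

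\textbf{Bounding the right-hand side.} The terms involving $\D\errRu$ and $\D\errRp$ are direct: since the projections commute with $\D$ (e.g.\ $\D\errRu = (1-\Ru)\D\uh$), Lemmas~\ref{lem_projectionA}--\ref{lem_projectionB} give the factor $H$ times $\|\D\uh\|_1$ or $\|\D\ph\|_1$, which Theorem~\ref{thm:stability} in turn bounds by $\datan + t_n^{-1/2}\|p_h^0\|_1$ (the second summand arising from the $f=0$ smoothing estimate). The $\errEp$-factors are absorbed by Young's inequality and a discrete Gronwall argument. The delicate term is $\tau\sumjn d(\D\errEuj, \errRpj)$, since $\D\errEu$ is not a priori controlled; I treat it by \emph{discrete summation by parts}, producing boundary contributions $d(\errEu, \errRp)$ and $d(\errEuinit, \rho_p^1)$ (controlled by $\|\errEu\|_1\cdot H\|\ph\|_1$ and absorbed into the left-hand side by Young) plus an interior sum of the form $\tau\sumjn d(\errEujj, (1-\Rp)\D\phj)$ bounded via Cauchy--Schwarz and the $L^2(0,t_n;H^1)$-stability of $\D\ph$ from Theorem~\ref{thm:stability}. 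Combined with $\|\uh-\ums\|_1 \leq \|\errRu\|_1 + \|\errEu\|_1$, this proves the displacement part of the bound.

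\textbf{Pointwise $H^1$ pressure bound and main obstacle.} The preceding delivers $\|\errEp\|_1$ only in the $L^2$-in-time sense, whereas the theorem requires a pointwise-in-$n$ estimate. To upgrade, I test the second error equation at time $n$ with $q = \errEp$, use the $b$-coercivity \eqref{eq:bounds_b} to extract $\|\errEp\|_1^2$ on the left, and estimate the remaining $d$- and $c$-products through the previously obtained pointwise bounds on $\|\errEp\|$, $\|\D\errEu\|$, the projection defects $\|\D\errRu\|, \|\D\errRp\|$, and Theorem~\ref{thm:stability}; combined with $\|\ph-\pms\|_1 \leq \|\errRp\|_1 + \|\errEp\|_1$ this finishes the proof. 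The \emph{principal obstacle} is the term $\tau\sumjn d(\D\errEuj, \errRpj)$: a naive Young-absorption would introduce $\|\D\errEu\|_1^2$, a quantity that is not on the left-hand side of the energy identity. Resolving it via discrete summation by parts with careful boundary-in-time bookkeeping, in combination with the smoothing estimate of Theorem~\ref{thm:stability}, is the technical heart of the argument and the source of the $t_n^{-1/2}H\|p_h^0\|_1$ contribution in the final bound.
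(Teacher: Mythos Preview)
Your error decomposition, the derivation of the error equations, and the observation that $\errEpinit=0$ and $\Vert\errEuinit\Vert_1\lesssim H\Vert p_h^0\Vert_1$ are all correct and coincide with the paper. The divergence from the paper is in the choice of test functions, and this creates two genuine gaps.

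\textbf{Gap 1: the pointwise $H^1$ pressure bound.} Your test pair $(v,q)=(\tau\D\errEu,\tau\errEp)$ produces, after telescoping, control of $\Vert\errEu\Vert_a$, $\Vert\errEp\Vert_c$ and $\tau\sumjn\Vert\errEpj\Vert_b^2$. It does \emph{not} produce any bound on $\Vert\D\errEu\Vert_1$ or $\Vert\D\errEp\Vert$, pointwise or summed. Your proposed upgrade step, testing the second error equation with $q=\errEp$, yields
\[
\Vert\errEp\Vert_b^2 = -d(\D\errEu,\errEp)-c(\D\errEp,\errEp)-d(\D\errRu,\errEp)-c(\D\errRp,\errEp),
\]
and estimating $d(\D\errEu,\errEp)$ requires precisely $\Vert\D\errEu\Vert_1$, which you do not have (the time-differenced first error equation only converts this into a need for $\Vert\D\errEp\Vert$, also unavailable). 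The paper avoids this entirely by testing the \emph{time-differenced} first error equation with $v=\D\errEu$ and the second with $q=\D\errEp$; the saddle-point cancellation still occurs, but now the left-hand side is $\Vert\D\errEu\Vert_a^2+\Vert\D\errEp\Vert_c^2+b(\errEp,\D\errEp)$, whose summation telescopes directly to the \emph{pointwise} bound $\Vert\errEp\Vert_b^2$ together with control of $\tau\sumjn\Vert\D\errEuj\Vert_1^2$. The pointwise $\Vert\errEu\Vert_1$ then follows from the first error equation via $\Vert\errEu\Vert_1\lesssim\Vert\errRp\Vert+\Vert\errEp\Vert$.

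\textbf{Gap 2: the $f=0$ case with $p_h^0\neq 0$.} You assert that the terms $\tau\sumjn\Vert\D\errRuj\Vert_1^2$ and the interior sum from your summation by parts are controlled via Theorem~\ref{thm:stability}. In the $p_h^0=0$ case this is true, since $\tau\sumjn\Vert\D\phj\Vert_1^2\lesssim(\datan)^2$. But for $f=0$, $p_h^0\neq 0$, Theorem~\ref{thm:stability} only gives $\Vert\D\phj\Vert_1\lesssim t_j^{-1}\Vert p_h^0\Vert_1$, so
\[
\tau\sumjn\Vert\D\errRuj\Vert_1^2 \lesssim H^2\tau\sumjn\Vert\D\phj\Vert_1^2 \lesssim H^2\Vert p_h^0\Vert_1^2\,\tau\sumjn t_j^{-2} \sim H^2\Vert p_h^0\Vert_1^2\,\tau^{-1},
\]
which diverges as $\tau\to 0$. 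The paper handles this by splitting into the cases $p_h^0=0$ and $f=0$ and, in the latter, multiplying the energy identity by $t_n^2$ before summing (Steps~4--5); the weight $t_j^2$ exactly cancels the $t_j^{-2}$ singularity of $\Vert\D\phj\Vert_1^2$, at the price of an additional lower-order sum $\tau\sum t_j\Vert\errEpj\Vert_1^2$ that must be bounded separately. This weighting argument is the actual source of the $t_n^{-1/2}H\Vert p_h^0\Vert_1$ term, not the summation-by-parts device you propose.
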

\begin{proof}
As in the proof of the convergence of the multiscale method in \cite{MalP17}, we split the errors in the displacement and pressure into two parts each, namely 
\begin{align*}
  &\errRu := \uh - \uProj, 
  &\errEu := \uProj - \ums, \\ 
  &\errRp := \ph - \pProj, 
  &\errEp := \pProj - \pms.
\end{align*}
Thus, $\rho^n_*$ contains the error of the projections and $\eta^n_*$ the difference of the projection and the multiscale solution. 

{\it Step 1} (estimates of $\rho^n_*$):  
In a first step we bound the projection error due to $\Ru$. For this, we apply Lemma~\ref{lem_projectionB} and use \eqref{eqn:var_d_1}, 
\begin{align*}
  \Vert \errRu \Vert_1
  = \Vert (1-\Ru) \uh \Vert_1 
  \lesssim H\, \Vert \calA \uh \Vert 
  &= H\, \sup_{v_h \in \Vh} \frac{|a(\uh, v_h)|}{\Vert v_h\Vert}  \\
  &= H\, \sup_{v_h \in \Vh} \frac{|d(v_h, \ph)|}{\Vert v_h\Vert}  
  \lesssim H\, \Vert \ph\Vert_1. 
\end{align*}
Note that we have used integration by parts in the last line. Theorem~\ref{thm:stability} then implies that $\Vert \errRu \Vert_1$ is bounded by 
\[
  \Vert \errRu \Vert_1
  \lesssim H\, \big( \Vert p_h^0 \Vert_1 + \Vert f\Vert_{L^2(0, t_n;L^2(D))} \big)
  \le H\,\datan.
\]
Similarly, the projection error due to $\Rp$ can be bounded using \eqref{eqn:var_d_2} by 
\begin{align*}
  \Vert \errRp \Vert_1
  = \Vert (1-\Rp) \ph \Vert_1 
  &\lesssim H\, \Vert \calB \ph \Vert \\
  &= H\, \sup_{q_h \in \Qh} \frac{|b(\ph, q_h)|}{\Vert q_h\Vert}  \\
  &= H\, \sup_{q_h \in \Qh} \frac{1}{\Vert q_h\Vert} |(f^n, q_h) - d(\D\uh, q_h) - c(\D\ph, q_h)|  \\
  &\lesssim H\, \big( \Vert f^n\Vert + \Vert \D\uh\Vert_1 + \Vert \D\ph \Vert \big).
\end{align*}
Using Theorem~\ref{thm:stability}, we obtain the bounds $\Vert \errRp \Vert_1 \lesssim H\, \datan$ if $p_h^0=0$ and $\Vert \errRp \Vert_1 \lesssim t_n^{-1/2} H\, \Vert p_h^0\Vert_1$ in the case $f=0$.

{\it Step 2}:   
In order to bound the remaining errors, we consider specific test functions within the systems \eqref{eqn:var_h} and \eqref{eqn:ms}. Using the definition of $\Ru$, we have for all $v\in\Vms \subseteq \Vh$ that 
\begin{align}
\label{eqn_proof_1}
  a(\errEu, v) - d(v, \errEp)
  = a(\Ru \uh, v) - d(v, \Rp \ph)    
  = a(\uh, v) - d(v, \Rp \ph)   
  = d(v, \errRp).    
\end{align}
Similarly, we have, using the definiton of $\Rp$, for all $q\in\Qms$ that 
\begin{align}
\label{eqn_proof_2}
  d(\D\errEu, q) + c(\D\errEp, q) + b(\errEp, q)
  &= d(\D\Ru\uh, q) + c(\D\Rp\ph, q) + b(\ph, q) - (f^n, q) \notag \\ 
  &= - d(\D\errRu, q) - c(\D\errRp, q).  
\end{align}
Combining equation \eqref{eqn_proof_1} at times $n$ and $(n-1)$, we obtain
\begin{align}
\label{eqn_proof_1b}
  a(\D\errEu, v) - d(v, \D\errEp) = d(v, \D\errRp)
\end{align}
for any $v\in \Vms$. Note that these equations are also valid for $n=1$ beause of the assumed construction of $u_h^0$ and $u^0_\text{ms}$. In order to obtain bounds for $\eta^n_*$, we consider the two cases where either $p_h^0=0$ or $f=0$. An application of the triangle inequality then gives the stated result. 

{\it Step 3} (estimates of $\eta^n_*$ if $p^0_h =0$): 
Note that $p^0_h =0$ also implies $u^0_h = 0$. We now insert the test function $v=\D\errEu$ in \eqref{eqn_proof_1b} and add this to equation \eqref{eqn_proof_2} with $q=\D\errEp$. Together, this yields 
\begin{align*}
  a(\D\errEu, \D\errEu) + c(\D\errEp, &\D\errEp) + b(\errEp, \D\errEp) \\
  &= d(\D\errEu, \D\errRp) - d(\D\errRu, \D\errEp) - c(\D\errRp, \D\errEp)
\end{align*}
and thus, 
\begin{align*}
  \Vert \D\errEu\Vert^2_a + \Vert\D\errEp&\Vert^2_c + b(\errEp, \D\errEp) \\
  &\le C_\alpha \Vert \D\errEu\Vert_1 \Vert \D\errRp\Vert + C_\alpha \Vert \D\errRu\Vert_1 \Vert\D\errEp\Vert + C_M \Vert \D\errRp\Vert\, \Vert\D\errEp\Vert \\
  &\le \frac12 \Vert \D\errEu\Vert_a^2 + \frac12 \Vert\D\errEp\Vert_c^2 + C \Vert \D\errRp\Vert^2 + C' \Vert \D\errRu\Vert_1^2.
\end{align*}
We can eliminate $\Vert \D\errEu\Vert_a$ and $\Vert\D\errEp\Vert_c$ on the right-hand side and multiply the estimate by $2\tau$. Then, the summation over $n$ yields 
\begin{align*}
  \tau \sumjn \Vert \D\errEuj\Vert^2_1 + \tau \sumjn \Vert\D\errEpj\Vert^2 + \Vert \errEp\Vert^2_1
  &\lesssim 2\tau \sumjn \Vert \D\errRpj\Vert^2 + 2\tau \sumjn \Vert \D\errRuj\Vert_1^2. 
\end{align*}
Note that we have used $\errEpinit = 0$. The sum including $\D\errRuj$ can be bounded using once more  Lemma~\ref{lem_projectionB}, 
\begin{align}
  \Vert \D\errRuj\Vert_1
  = \Vert (1-\Ru) \D\uhj \Vert_1 
  &\lesssim H\, \sup_{v_h \in \Vh} \frac{|a(\D\uhj, v_h)|}{\Vert v_h\Vert} \notag  \\
  &= H\, \sup_{v_h \in \Vh} \frac{\big|  d(v_h, \D \phj) \big|}{\Vert v_h\Vert} 
  \lesssim H\,  \Vert \D \phj \Vert_1. \label{eqn_proof_3}
\end{align}
Together with Theorem~\ref{thm:stability} this then leads to 
\begin{align*}
  \tau \sumjn \Vert \D\errRuj\Vert_1^2
  \lesssim \tau\, H^2\, \sumjn \Vert \D \phj \Vert_1^2 
  \lesssim (H\, \datan)^2.
\end{align*}
On the other hand, the sum including $\D\errRpj$ can be bounded with the help of the estimate 
\begin{align*}
  \Vert \D\errRpj\Vert
  = \Vert (1 - \Rp) \D \phj\Vert 
  \lesssim H\, \Vert  \D \phj\Vert_1, 
\end{align*}
which follows from Lemma~\ref{lem_projectionA} and results in 
\begin{align*}
   \tau \sumjn \Vert \D\errRpj\Vert^2
   \le \tau \sumjn H^2\, \Vert  \D \phj\Vert^2_1
  \lesssim (H\, \datan)^2.
\end{align*}
This does not only provide the bound $\Vert \errEp\Vert_1 \lesssim H\, \datan$ but also, using \eqref{eqn_proof_1},
\[
  \Vert \errEu\Vert_1 
  \lesssim \Vert \errRp\Vert + \Vert \errEp\Vert
  \lesssim H\, \datan.
\]
\delete{
but also 
\begin{align}
\label{eqn_sum_Detauj}
  \tau \sumjn \Vert \D\errEuj\Vert^2_1 
  \lesssim ( H\, \datan)^2. 
\end{align}
The latter estimate will be used in the following part of the proof. 
Consider the sum of equation \eqref{eqn_proof_1} with $v = \D\errEu$ and equation \eqref{eqn_proof_2} with $q = \errEp$. Multiplied by $2\tau$ this gives 
\begin{align*}
%
  \Vert \errEu\Vert^2_a - \Vert \errEupre\Vert^2_a + \Vert \errEp\Vert_c^2 - \Vert \errEppre&\Vert_c^2 + 2\tau \Vert \errEp\Vert_b^2 \\
  &\le 2\tau\, \big[ d(\D\errEu, \errRp) - d(\D\errRu, \errEp) - c(\D\errRp, \errEp) \big]. 
\end{align*}
A summation over $n$ then gives due to $\errEuinit = 0$ and $\errEpinit = 0$, 
\begin{align*}
  \Vert \errEu\Vert^2_a + \Vert &\errEp\Vert_c^2  + 2\tau\sumjn \Vert \errEpj\Vert_b^2 \\
  &\le 
  2\tau \sumjn d(\D\errEuj, \errRpj) - 2\tau\sumjn d(\D\errRuj, \errEpj) - 2\tau \sumjn c(\D\errRpj, \errEpj) \\
  &\lesssim 2\tau \sumjn \Vert \D\errEuj\Vert~\Vert \errRpj\Vert_1 + 2\tau\sumjn \Vert \D\errRuj\Vert~\Vert \errEpj\Vert_1 + 2\tau \sumjn \Vert  \D\errRpj\Vert~\Vert \errEpj\Vert.
\end{align*}
Using Youngs inequality, we can eliminate the term $\tau \sumjn \Vert \errEpj\Vert_b^2$ on the right-hand side. This then leads to  
\begin{align*}
  \Vert \errEu\Vert^2_a + \Vert \errEp&\Vert_c^2 + \tau\sumjn \Vert \errEpj\Vert_b^2 \\
  &\lesssim \tau \sumjn \Vert \D\errEuj\Vert^2 + \tau \sumjn \Vert \errRpj\Vert^2_1 + \tau\sumjn \Vert \D\errRuj\Vert^2 + \tau \sumjn \Vert  \D\errRpj\Vert^2.
\end{align*}
Thus, the estimates \eqref{eqn_sum_Detauj}, \eqref{eqn_sum_Drhouj}, and \eqref{eqn_sum_Drhopj} together with 
\[
  \tau \sumjn \Vert \errRpj\Vert^2_1
  \lesssim (H\, \datan)^2, 
\]
which follows from the calculations in step 1, yield the assertion in the case $p^0_h =0$. 
}

{\it Step 4} (estimates of $\eta^n_*$ if $f=0$):   
We emphasize that also in this case we have $\eta^0_p=0$ by assumption. Together with \eqref{eqn_proof_1} this yields for $\eta_u^0$ the estimate 
\[
  \Vert \eta^0_u\Vert^2_1
  \lesssim a(\eta^0_u, \eta^0_u)
  = d(\eta^0_u, \eta^0_p) + d(\eta^0_u, \rho^0_p)
  \lesssim \Vert\eta^0_u\Vert_1 \Vert \rho^0_p \Vert
\]
and thus $\Vert \eta^0_u\Vert_1 \lesssim \Vert \rho^0_p \Vert \lesssim H\, \Vert p^0_h \Vert_1$. 
Note that it is sufficient to bound $\Vert \errEp\Vert_1$ in terms of $H\, \datan$, since we have $\Vert \errEu\Vert_1 \lesssim \Vert \errRp\Vert + \Vert \errEp\Vert$ by \eqref{eqn_proof_1}. 
As in step 3 we consider the sum of equation~\eqref{eqn_proof_1b} with $v = \D \errEu$ and equation~\eqref{eqn_proof_2} with $q = \D \errEp$. 
Multiplying the result by $2\tau$, we get
\begin{align*}
  2\tau \Vert \D\errEu\Vert^2_a + 2 \tau \Vert\D\errEp\Vert^2_c + \Vert \errEp\Vert_b^2 - \Vert \errEppre\Vert_b^2 
  \lesssim 2\tau  \Vert \D\errRp\Vert^2 +  2\tau \Vert \D\errRu\Vert_1^2.
\end{align*}
Another multiplication by $t_n^2$ and the estimate $t_n^2-t^2_{n-1} \le 3\tau t_{n-1}$ then lead to 
\begin{align*}
  2\tau t^2_n \Vert \D\errEu\Vert^2_a + 2 \tau t^2_n \Vert\D\errEp&\Vert^2_c + t^2_n \Vert \errEp\Vert_b^2 - t^2_{n-1} \Vert \errEppre\Vert_b^2  \\
  &\lesssim 2\tau t^2_n \Vert \D\errRp\Vert^2 +  2\tau t^2_n \Vert \D\errRu\Vert_1^2
  + 3\tau t_{n-1} \Vert \errEppre\Vert_1^2.
\end{align*}
Taking the sum, we obtain 
\begin{align}
  \tau \sum^n_{j=1} t_j^2 \Vert \D\errEuj\Vert_1^2 + t^2_n \Vert \errEp\Vert_1^2
  &\lesssim \tau \sum^n_{j=1} t_j^2 \Vert \D\errEuj\Vert_a^2 + \sum_{j=1}^n  \Big( t^2_j \Vert \errEpj\Vert_b^2 - t^2_{j-1} \Vert \errEpjj\Vert_b^2 \Big) \notag \\
  &\lesssim \tau \sum_{j=1}^n t^2_j \Vert \D\errRpj\Vert^2 + \tau \sum_{j=1}^n  t^2_j \Vert \D\errRuj\Vert_1^2 + \tau\sum_{j=1}^{n-1}  t_j \Vert \errEpj\Vert_1^2. \label{eqn_proof_4}
\end{align}
To bound the first sum, we apply first Lemma~\ref{lem_projectionA} and then Theorem~\ref{thm:stability},
\[
  \tau \sum_{j=1}^n t^2_j \Vert \D\errRpj\Vert^2
  \lesssim \tau\sum_{j=1}^n t^2_j H^2 \Vert \D \phj\Vert_1^2
  \lesssim \tau\sum_{j=1}^n  H^2 \Vert p_h^0 \Vert^2
  = t_n H^2 \Vert p_h^0 \Vert^2_1. 
\]
For the second sum we use the estimate $\Vert \D\errRuj\Vert_1 \lesssim H\,  \Vert \D \phj \Vert_1$ from \eqref{eqn_proof_3}, which is also valid for non-zero initial values. With Theorem~\ref{thm:stability} this then leads to 
\[
  \tau\sum_{j=1}^n t^2_j \Vert \D\errRuj\Vert_1^2
  \lesssim \tau\sum_{j=1}^n t^2_j H^2  \Vert \D \phj \Vert_1^2
  \lesssim t_n H^2 \Vert p_h^0 \Vert^2_1.
\]
{\it Step 5} (estimate of the last sum in \eqref{eqn_proof_4}): 
In order to bound the third sum on the right-hand side of \eqref{eqn_proof_4}, we consider once more the sum of equations~\eqref{eqn_proof_1} and \eqref{eqn_proof_2}. For test functions $v = \D \errEu$ and $q = \errEp$ we get after multiplication with $2\tau t_n$ and application of Youngs inequality
\begin{align*}
  t_n \big( \Vert\errEu\Vert^2_a - \Vert \errEupre&\Vert^2_a  \big) 
  + t_n \big( \Vert\errEp\Vert^2_c - \Vert \errEppre\Vert^2_c  \big)
  + 2\tau t_n \Vert \errEp\Vert_b^2 \\
%
%
  &\lesssim  \lambda\tau t_n^2 \Vert \D\errEu\Vert^2_1 + \lambda^{-1}\tau \Vert\errRp\Vert^2  
  +  \tau t_n^2 \Vert \D\errRu\Vert^2 
  +  \tau t_n^2 \Vert \D\errRp\Vert^2 + \tau \Vert\errEp\Vert^2
\end{align*}
for any $\lambda>0$. 
We add $\tau \Vert \errEupre\Vert^2_a + \tau \Vert \errEppre\Vert^2_c$ on both sides and take the sum over $n$ such that we obtain 
\begin{align*}
  t_n \Vert\errEu\Vert^2_1 + t_n \Vert\errEp\Vert^2 
  + \sum_{j=1}^n \tau t_j &\Vert \errEpj\Vert_1^2 
  \lesssim \lambda \underbrace{\tau \sum^n_{j=1} t_j^2 \Vert \D\errEuj\Vert^2_1}_{\textcircled{\scriptsize \texttt{a}}} 
  + \frac 1 \lambda \underbrace{\tau \sum^n_{j=1} \Vert\errRpj\Vert^2 }_{\textcircled{\scriptsize \texttt{b}}}  \\
  &+  \underbrace{\tau\sum_{j=1}^n t_j^2\, \big( \Vert \D\errRuj\Vert^2 + \Vert \D\errRpj\Vert^2 \big) }_{\textcircled{\scriptsize \texttt{c}}}
  + \underbrace{ \tau\sum^n_{j=1} \big( \Vert\errEpj\Vert^2
  + \Vert \errEujj\Vert^2_1 \big) }_{\textcircled{\scriptsize \texttt{d}}}. 
\end{align*}
Note that the sum on the left-hand side is the term we aim to bound. For a sufficiently small $\lambda$ (depending only on the generic constant of the estimates) we can eliminate ${\textcircled{\scriptsize \texttt{a}}}$ with the left-hand side in \eqref{eqn_proof_4}. For the remaining three parts on the right-hand side we estimate
\[
  \textcircled{\scriptsize \texttt{b}}
  = \tau\sum^n_{j=1} \Vert\errRpj\Vert^2
  \lesssim \tau\sum^n_{j=1} H^2 \Vert \phj\Vert^2_1
  \lesssim \tau\sum^n_{j=1} H^2 \Vert p^0_h\Vert^2_1
  = t_n H^2 \Vert p_h^0\Vert^2_1
\]
and, with Lemma~\ref{lem_projectionA} and Theorem~\ref{thm:stability}, 
\[
  \textcircled{\scriptsize \texttt{c}}
  \lesssim \tau\sum_{j=1}^n H^2\,t_j^2\, \Big( \Vert \D \uhj\Vert_1^2 + \Vert \D \phj\Vert_1^2 \Big)
  \lesssim \tau\, (t_n+1) \sum_{j=1}^n H^2\, \Vert p_h^0 \Vert_1^2 
  = (t_n^2+t_n)\, H^2\, \Vert p_h^0 \Vert_1^2 .
\]
Finally, with the equations~\eqref{eqn_proof_1} and \eqref{eqn_proof_2} and test functions $v=\errEu$ and $q=\errEp$ one can show as in \cite{MalP17} that also $\textcircled{\scriptsize \texttt{d}} \lesssim t_n H^2 \Vert p_h^0\Vert^2_1$. 
In summary, this yields 
\[
  \Vert \errEp\Vert_1
  \lesssim (1+t_n^{-1/2})\, H\, \Vert p_h^0\Vert_1.
  \qedhere
\]
\end{proof}
This theorem shows together with Theorem~\ref{thm:finescale} that the multiscale method proposed in \eqref{eqn:ms} converges linearly as $H+\tau$. For this we consider the $L^\infty(0,T;V)$ norm for $u$ and the $L^\infty(0,T;L^2(D)) \cap L^2(0,T;Q)$ norm for $p$. We emphasize that the involved constants are independent of derivatives of the coefficients $\mu$, $\lambda$, $\kappa$, and $\alpha$. 
%
\subsubsection{Localization}\label{sss:local}
The convergence result in Theorem~\ref{thm:poro} assumes that the basis functions of the space $\Vms$ and $\Qms$ are given. In practical simulations, however, they need to be approximated themselves. These basis functions have global support in the domain $D$ but, as shown in \cite{MalP14}, decay exponentially, see also \cite{KY16,KPY18} for an alternative constructive proof. This property allows to consider a truncation of the basis functions. More precisely, the basis functions in $\Vms$ and $\Qms$ are computed by solving local problems. Given the basis function $\lambda_z\in V_H$ to an inner node $z$, the corresponding multiscale basis function~$\Ru \lambda_z$ is computed as in Section~\ref{sss:schweden:spaces} but with the computational domain restricted to a subdomain of $\ell$ additional coarse element layers surrounding the support of~$\lambda_z$. The so-called {\em localization parameter} $\ell$ defines a new discretization parameter. More details on the practical computation can be found in \cite{HenP13}. 

Note that the convergence result in Theorem~\ref{thm:poro} remains valid if the localization parameter $\ell$ is chosen sufficiently large, i.e., $\ell \approx \log H$.
%
\section{Numerical Examples}\label{sec:numex}
In order to assess the method numerically, we consider numerical examples in two and three space dimensions. We measure the error in the discrete time-dependent norm
\begin{equation*}
 \|(v,q)\|^2_{D,N} := \sum_{i=1}^N \tau \Big(\|\nabla v^i\|^2 + \|\nabla q^i\|^2\Big)
\end{equation*}
with $N=T/\tau$ the number of time steps and $v = \{v^i\}_{i=1}^N$, $q = \{q^i\}_{i=1}^N$. The corresponding relative error between the multiscale solution and the fine scale solution is then defined by
\begin{equation*}
 \|(u_\text{ms},p_\text{ms})-(u_h,p_h)\|_\mathrm{rel} = \frac{\|(u_\text{ms},p_\text{ms})-(u_h,p_h)\|_{D,N}}{\|(u_h,p_h)\|_{D,N}}.
\end{equation*}
Further, we set $D := (0,1)^d$ as the domain and $T := 1$ as final time with time step size $\tau = 0.01$ (and thus $N = 100$) for the examples with $d=2$ and $\tau = 0.05$ (and thus $N = 20$) for the example with $d=3$.

The reference solution $(u_h,p_h)$ is computed on a regular uniform mesh $\mathcal{T}_h$ consisting of elements with given mesh size $h$.
The local corrector problems are also solved on patches with mesh size $h$. The parameters are chosen to be piecewise constant on elements of $\mathcal{T}_\epsilon$ and the value is obtained as a uniformly distributed random number between two given bounds, i.e., for any $K \in \mathcal{T}_\epsilon$ we have
\begin{equation}
\label{eq:coeff}
  \kappa|_{K} \sim U[0.1,0.12], \ \
  \mu|_{K} \sim U[32.2,62.2], \ \
  \lambda|_{K} \sim U[40.98,60.98],\ \
  \alpha|_{K} \sim U[0.5,1]
\end{equation}
and $M = \nu = 1$, where $\mathcal{T}_\epsilon$ is a mesh with mesh size $\epsilon > h$ to guarantee that the reference solution is reasonable. Note that we take representative global samples for the above parameters. In all numerical tests, the localization parameter from Section~\ref{sss:local} is set to $\ell = 2$ which showed to be sufficient. It should be mentioned that the choice of the localization parameter generally needs to be increased for smaller values of $H$ and may be decreased for larger $H$, see \cite{HenP13} for details. 
The computations are done using an adaption of the code from \cite{Hel17}. For a detailed description about the implementation of the LOD method, we further refer to \cite{EngHMP16}.
%
%
\subsection{Two-dimensional examples}
In all two-dimensional experiments, the fine mesh size is set to $h = \sqrt{2}\cdot2^{-8}$ and $\epsilon = \sqrt{2}\cdot2^{-6}$.

For the first example we set $f = 1$ and $p^0(x) = (1-x_1)\,x_1\,(1-x_2)\,x_2$.  We prescribe homogeneous Dirichlet boundary conditions for $p$ on $\partial D$, homogeneous Dirichlet boundary conditions for $u$ on $\{x\in\partial D\colon x_2 = 0 \text{ or } x_2 = 1\}$ and homogeneous 
Neumann boundary conditions on $\{x\in\partial D\colon x_1 = 0 \text{ or } x_1 = 1\}$. The results for different values of $H$ are shown
in Figure~\ref{fig_exp12} (left, {\protect \tikz{ \draw[line width=1.5pt, myRed] (0,0) -- (0.13,0.23) -- (0.26, 0) -- cycle;}} ). The plot indicates a convergence rate even slightly better than $1$ with respect to the coarse mesh size $H$ and becomes steeper for smaller values of~$H$, since the $\epsilon$-scale is almost resolved. 

In the second example we consider $p^0(x) = \sqrt{1-x_2}$, $f = 0$, and $u$ and $p$ fulfill homogeneous Dirichlet boundary conditions on $\{x\in\partial D \colon x_2 = 1\}$ and homogeneous Neumann boundary conditions on the remaining parts of  $\partial D$. Here, the same behavior as in the last example may be observed, cf.~Figure~\ref{fig_exp12} (left, {\protect \tikz{ \draw[line width=1.5pt, myBlue] circle (0.6ex);}}). 

Figure~\ref{fig_exp12} (left, {\protect \tikz{ \draw[line width=1.5pt, myGreen] (0,0) rectangle (0.23,0.23);}}) shows the results of the third example, where $f$ is chosen as random fine scale finite element function with values between $0$ and $1$, $p^0(x) = (1-x_2)\,x_2$, and with the same boundary conditions as in the previous experiment. The plot shows the predicted linear convergence. In this example, the error curve does not become steeper in the regime $H \leq \epsilon$, which may be related to the fact that $f$ is a function on the fine scale. 
\begin{figure}
\begin{center}
\scalebox{0.8}{
%
%
\begin{tikzpicture}

\begin{axis}[%
width=2.8in,
height=2.2in,
at={(0.952in,0.761in)},
scale only axis,
xmode=log,
xmin=0.008,
xmax=1,
xminorticks=true,
xlabel={mesh size $H$},
ymode=log,
ymin=0.0005,
ymax=1,
yminorticks=true,
axis background/.style={fill=white},
legend style={legend cell align=left, align=left, draw=white!15!black},
legend pos=north west
]
\addplot [color=myRed, line width=1.5pt, mark size=4.0pt, mark=triangle]
  table[row sep=crcr]{%
0.707106781186548	0.734892773683\\
0.353553390593274	0.349978849365\\
0.176776695296637	0.147004979476\\
0.0883883476483184	0.0562541421158\\
0.0441941738241592	0.0202724915313\\
0.0220970869120796	0.00683569790748\\
0.0110485434560398	0.00166090502517\\
};
\addlegendentry{exp 1}

\addplot [color=myBlue, line width=1.5pt, mark size=3.5pt, mark=o]
  table[row sep=crcr]{%
0.707106781186548	0.184464380957\\
0.353553390593274	0.0915540214912\\
0.176776695296637	0.0414402260784\\
0.0883883476483184	0.0166536896593\\
0.0441941738241592	0.00598141247855\\
0.0220970869120796	0.00235673892368\\
0.0110485434560398	0.000688962695079\\
};
\addlegendentry{exp 2}

\addplot [color=myGreen, line width=1.5pt, mark size=3.5pt, mark=square]
  table[row sep=crcr]{%
0.707106781186548	0.381265896956\\
0.353553390593274	0.165022948516\\
0.176776695296637	0.0670299522273\\
0.0883883476483184	0.0263145171409\\
0.0441941738241592	0.0118474161589\\
0.0220970869120796  0.00695731622328\\
0.0110485434560398	0.0037295619875\\
};
\addlegendentry{exp 3}

\addplot [color=black, line width=0.7pt, dashed]
  table[row sep=crcr]{%
0.7		0.07\\
0.0115	0.00115\\
};
\addlegendentry{order 1}

\end{axis}
\end{tikzpicture}
	\scalebox{0.8}{
%
%
\begin{tikzpicture}

\begin{axis}[%
width=2.8in,
height=2.2in,
at={(0.949in,0.696in)},
scale only axis,
xmode=log,
xmin=0.11,
xmax=0.8,
xminorticks=true,
xlabel style={font=\color{white!15!black}},
xlabel={mesh size $H$},
ymode=log,
ymin=0.01,
ymax=1,
yminorticks=true,
axis background/.style={fill=white},
legend style={legend cell align=left, align=left, draw=white!15!black},
legend pos=north west
]
\addplot [color=myOrange, line width=1.5pt, mark size=4.0pt, mark=*]
  table[row sep=crcr]{%
0.707106781186548	0.454243247838\\
0.353553390593274	0.217049922349\\
0.235702260395516	0.124374103655\\
0.117851130197758	0.0347300345963\\
};
\addlegendentry{3D example}

\addplot [color=black, line width=0.7pt, dashed]
  table[row sep=crcr]{%
0.72	0.12\\
0.12	0.02\\
};
\addlegendentry{order 1}

\end{axis}
\node[rotate = 90] at (0.5, 4.5) {error in $\|\cdot\|_\mathrm{rel}$};
\end{tikzpicture}
\end{center}
\caption{Errors of the multiscale method in two (left) and three space dimensions (right).}
\label{fig_exp12}
\end{figure}
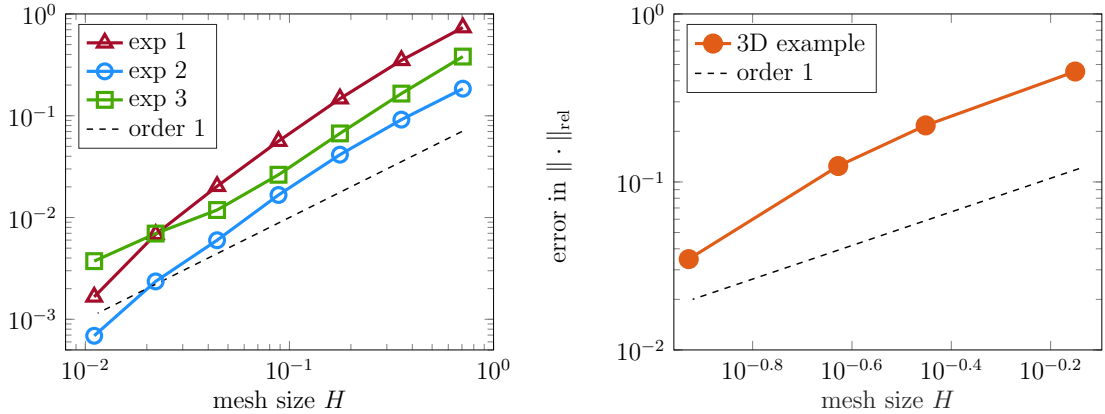
%
\subsection{Three-dimensional example}
For the three-dimensional setting, we restrict ourselves to $h = \sqrt{2}/3\cdot2^{-3}$ and $\epsilon = \sqrt{2}/3\cdot2^{-2}$ due to the high computational complexity. We choose the coefficients as in \eqref{eq:coeff}, set $f = 0$, $p^0(x) =  (1-x_1)\,x_1\,(1-x_2)\,x_2\,(1-x_3)\,x_3$, homogeneous Dirichlet boundary conditions on $\{x\in\partial D \colon x_3 = 1\}$ and homogeneous Neumann boundary conditions on the remaining parts of  $\partial D$, and $\ell = 2$ as before. The errors for this example are plotted in Figure~\ref{fig_exp12} (right, {\protect \tikz{ \fill[myOrange] circle (0.7ex);}}) and are mainly to indicate that the three-dimensional setting can be handled if appropriate computing capacities are available.
%
%
\section{Conclusions}
Within this paper, we have considered a poroelastic model problem with rapidly  oscillating material parameters. The proposed multiscale finite element method is based on the LOD method and exploits the saddle point structure of the system. In contrast to the classical approach based on the static equations, which leads to coupled corrector problems, this method enables decoupled corrections for displacement and pressure such that no additional fine scale corrections are necessary. Although the correctors are independent of the coefficient $\alpha$, we are able to prove first-order convergence of the method, which is also illustrated by numerical experiments.

Future research aims to study further enrichment via local eigenvalue computations to control the reliability of the approach in the case when the coefficients model high contrast inclusions and channels, as well as fractured regions \cite{ChuEL18b, ChuEL18, PetS16,doi:10.1137/17M1147305}.

\bibliographystyle{plain}

\end{document}